\theoremstyle{definition}
  \newtheorem{definition}{Definition}[section]
\theoremstyle{plain}
  \newtheorem{theorem}{Theorem}[section]
  \newtheorem{corollary}{Corollary}[section]
  \newtheorem{lemma}{Lemma}[section]
\title{Delta shock solution to the generalized one-dimensional zero-pressure gas dynamics system with linear damping}
\author{Richard De la cruz \thanks{Universidad Pedagógica y Tecnológica de Colombia, School of Mathematics and Statistics, 150003, Colombia. {\em e-mail:} \href{mailto:richard.delacruz@uptc.edu.co}{richard.delacruz@uptc.edu.co} } 
\\ Juan Carlos Juajibioy \thanks{Universidad Pedagógica y Tecnológica de Colombia, School of Mathematics and Statistics, 150003, Colombia. {\em e-mail:} \href{mailto:juan.juajibioy@uptc.edu.co}{juan.juajibioy@uptc.edu.co}}
}
\date{ }
\begin{document}
\maketitle

\begin{abstract}
In this paper, we propose a time-dependent viscous system and by using  the vanishing viscosity method we show the existence of delta shock solution for a generalized one-dimensional zero-pressure gas dynamics system with linear damping. 
\end{abstract}
{\bf Keywords:} Nonstrictly hyperbolic system, generalized one-dimensional zero-pressure gas dynamics system, linear damping,
 time-dependent viscous system, delta shock wave solution.\\
{\bf MSC (2010):} 35L40, 35F50
\section{Introduction}
In this paper, we study the Riemann problem to the following hyperbolic system of conservation laws with linear damping
\begin{equation}
\label{system_ld}
  \begin{cases}
   v_t+\left(vu^k \right)_x=0,\\
   (vu)_t+(vu^{k+1})_x=-\alpha vu,
  \end{cases}
 \end{equation}
where $k$ is an odd natural number, $\alpha>0$ is a constant and the initial data is given by
\begin{equation} \label{datoRiemann}
 (v(x,0),u(x,0))=\begin{cases}
                (v_-,u_-), &\text{if } x<0,\\
                (v_+,u_+), &\text{if } x>0,
               \end{cases}
\end{equation}
for arbitrary constant states $(v_\pm,u_\pm)$ with $v_\pm>0$. 
It is well known that the system \eqref{system_ld} is not strictly hyperbolic with eigenvalue $\lambda = u^k$ and right eigenvector ${\bf r}=(1,0)$. Moreover, $\nabla \lambda \cdot {\bf r}=0$ and therefore the system is linearly degenerate.
The principal reason to choose the condition on $k$ is due to physical motivation. In fact, for $k=1$ the system \eqref{system_ld} becomes the one-dimensional zero-pressure gas dynamics system with linear damping.
In the one-dimensional zero-pressure gas dynamics 
$v\ge 0$ denotes the density of mass and $u$ the velocity. The one-dimensional zero-pressure gas dynamics system can be used to describe the motion process of free particles sticking under collision in the low temperature and the information of large-scale structures in the universe \cite{BG, SZ}.
Really, the one-dimensional zero-pressure gas dynamics system arise in a wide variety of models in physics, for a more detailed description see for example \cite{Bouchut, ERS, MMZ, LeVeque}. 
So, more details on the studies of the one-dimensional zero-pressure gas dynamics system can be found in \cite{Bouchut, BG, ERS, HW, LeVeque, LY}.
 The Riemann problem for the following system 
\begin{equation*} 
\begin{cases}
v_t+(vf(u))_x=0,\\
(vu)_t+(vuf(u))_x=0.
\end{cases}
\end{equation*}
was solved completely in \cite{HYang} by mean of characteristic analysis and the vanishing viscosity method.
 In 2016, Shen \cite{Shen} studied the Riemann problem for the one-dimensional zero-pressure gas dynamics with Coulomb-like friction term and the solutions involve delta shock wave and vacuum state. Shen's paper is the first work for the one-dimensional zero-pressure gas dynamics system with a source term.
 Recently, Keita and Bourgault \cite{KB} solved the Riemann problem to the one-dimensional zero-pressure gas dynamics system with linear damping (i.e. the system \eqref{system_ld} when $k=1$) and their results include delta shock wave solution.\\
 
  In this paper, 
we will find that the delta shock wave solution appears in the Riemann solution to the system \eqref{system_ld} with inital data \eqref{datoRiemann}.   Therefore, we
propose the following time-dependent viscous system
 \begin{equation}
\label{system_ld_eps}
  \begin{cases}
   v_t+\left(vu^k \right)_x=0,\\
   (vu)_t+(vu^{k+1})_x=\varepsilon \frac{1}{\alpha k} e^{-\alpha kt}(1-e^{-\alpha kt}) u_{xx}-\alpha vu,
  \end{cases}
 \end{equation}
where  $k$ is an odd natural number and  $\alpha>0$ is a constant.
Notice that when $\alpha \to 0+$, we have that $\lim\limits_{\alpha \to 0+} \frac{1}{\alpha k} e^{-\alpha kt}(1-e^{-\alpha kt})=t$ and the system \eqref{system_ld_eps} coincides with the viscous system (4.1) in \cite{HYang} with $f(u)=u^k$.
The viscous system \eqref{system_ld_eps} is well motivated by scalar conservation law with time-dependent viscosity
\begin{equation*} \label{GBurgers}
u_t+ F(u)_x = G(t)u_{xx}.
\end{equation*}
where $G(t) > 0$ for $t > 0$.
When $F(u)=u^2$ the scalar equation is called the Burgers equation with time-dependent viscosity. This type of Burgers equation
was studied as a mathematical model of the propagation of the finite-amplitude sound waves in variable-area ducts, where $u$ is an acoustic variable, with the linear effects of changes in the duct area taken out, and the time-dependent viscosity $G(t)$ is  the  duct  area, see \cite{Crighton, DE, WZ}.
The reader can find results concerning the existence, uniqueness and explicit solutions to the Burgers equation with time-dependent viscosity with suitable conditions for $G(t)$ in \cite{Crighton, CS, DE, Scott, WZ2, WZ, HZhang, ZW} and references cited therein.
The Burgers equation with time-dependent viscosity and linear damping
was studied in \cite{MVSK} and their results include explicit solutions for differents $G(t)$.\\
When $G(t)=\varepsilon t$ and $\varepsilon>0$, for systems of hyperbolic conservation laws with time-dependent viscosity we refered the works developed by Tupciev in \cite{Tupciev} and Dafermos in \cite{Dafermos}. The results obtained in \cite{Dafermos} and \cite{Tupciev} not including the delta shock waves solutions.
For systems of hyperbolic conservation laws with delta shock solutions
the reader may consult \cite{DS, Ercole, TZZ, HYang, YZ}. Recently, in \cite{Delacruz} a nonlinear time-dependent viscosity $G(t)$ was consider to obtained delta shock wave solution for a particular system of conservation laws with linear damping.\\

Notice that our proposal of the time-dependent viscous system \eqref{system_ld_eps} is a special case of the general systems of conservation laws with time-dependent viscous system.
Therefore, we consider the viscous system \eqref{system_ld_eps} with initial data \eqref{datoRiemann}.
 Observe that if $(\widehat{v},\widehat{u})$ solves
\begin{equation} \label{sysVis1}
\begin{cases}
\widehat{v}_t+e^{-\alpha kt}(\widehat{v} \widehat{u}^k)_x=0,\\
(\widehat{v} \widehat{u})_t+ e^{-\alpha kt} \left( \widehat{v} \widehat{u}^{k+1} \right)_x  =  \varepsilon \frac{1}{\alpha k}e^{-\alpha kt}(1-e^{-\alpha kt})\widehat{u}_{xx},
\end{cases}
\end{equation} 
with initial condition
\begin{equation} \label{datsysVisc1}
 (\widehat{v}(x,0),\widehat{u}(x,0))=\begin{cases}
                (v_-,u_-), &\text{if } x<0,\\
                (v_+,u_+), &\text{if } x>0,
               \end{cases}
\end{equation}
then $(v,u)$ defined by $(v,u)=(\widehat{v},\widehat{u}e^{-\alpha t})$ solves the problem \eqref{system_ld_eps}--\eqref{datoRiemann}. 
Here and after,  denote $(\widehat{v}^\varepsilon, \widehat{u}^\varepsilon)$ as $(\widehat{v}, \widehat{u})$ when there is no confusion.
In order to solve the problem \eqref{sysVis1}--\eqref{datsysVisc1}, we introduce the similarity variable $\xi$ and solutions to \eqref{sysVis1} should approach for large times a similarity 
solution $(\widehat{v},\widehat{u})$ to \eqref{sysVis1} of the form $\widehat{v}(x,t)=\widehat{v}(\xi)$, $\widehat{u}(x,t)=\widehat{u}(\xi)$ and $\xi= a(t) x$ for some suitable smooth function $a(t) \ge 0$ for $t>0$ (more details on the similarity methods can be found in \cite{Barenblatt, Henriksen, Luo, Polyanin, Sachdev, Suto} and references therein).
Therefore, we introduce the similarity variable $\xi=\frac{\alpha kx}{1-e^{-\alpha kt}}$ and the system \eqref{sysVis1} can be written as
\begin{equation} \label{sysVis2}
\begin{cases} 
-\xi \widehat{v}_\xi+(\widehat{v}\widehat{u}^k)_\xi=0,\\
- \xi  (\widehat{v}\widehat{u})_\xi+(\widehat{v}\widehat{u}^{k+1})_\xi = \varepsilon \widehat{u}_{\xi \xi},
\end{cases}
\end{equation} 
and the initial data becomes the boundary condition
\begin{equation} \label{datsysVisc2}
(\widehat{v}(\pm \infty), \widehat{u}(\pm \infty))=(v_\pm,u_\pm).
\end{equation}
Note that when $\alpha \to 0+$, the similarity variable $\xi$ converges to $x/t$ which is well used in many methods to study the behavior and structure of solutions of nonlinear hyperbolic systems of conservation laws.
Now, when $\varepsilon \to 0+$, the system \eqref{sysVis1} becomes
\begin{equation} \label{sys1}
\begin{cases}
\widehat{v}_t+e^{-\alpha kt}(\widehat{v} \widehat{u}^k)_x=0,\\
(\widehat{v} \widehat{u})_t+ e^{-\alpha kt} \left( \widehat{v} \widehat{u}^{k+1} \right)_x  =  0.
\end{cases}
\end{equation} 
Using the vanishing viscosity method, and following works by Tan, Zhang and Zheng \cite{TZZ}, Li and Yang \cite{LY}, Yang \cite{HYang} and De la cruz and Santos \cite{DS} with some appropriate modifications, 
we show the existence of solutions for system \eqref{sysVis2} with boundary condition \eqref{datsysVisc2}.
 The main difficulty in applying the vanishing viscosity method developed in \cite{LY, TZZ,  HYang} is to choose a suitable Banach space and a bounded convex closed subset to use the Schauder fixed point theorem. Therefore, we specifically follow the vanishing viscosity method developed in \cite{DS}.
After, we study the behavior of the solutions $(\widehat{v}^\varepsilon, \widehat{u}^\varepsilon)$ as $\varepsilon \to 0+$ to obtain a delta shock wave solution for the system \eqref{sys1}. Finally, as $(v(x, t), u(x, t)) = (\widehat{v}(x,t), \widehat{u}(x,t)e^{-\alpha t})$, the solutions of \eqref{sys1} are using to obtain solutions for the original system \eqref{system_ld}.
\\

The outline of the remaining of the paper is as follows. In Section 2, we show the existence of solutions to the viscous system \eqref{sysVis2} with boundary condition \eqref{datsysVisc2}. In Section 3, we study the behavior of the solutions $(\widehat{v}^\varepsilon, \widehat{u}^\varepsilon)$ as $\varepsilon \to 0+$ and we show the existence of delta shock solution for the system \eqref{sysVis1} without viscosity. In Section 4, we show the existence of delta shock solution for the nonhomogeneous system \eqref{system_ld}. Final remarks are given in Section 5.

 \section{Existence of solutions to the viscous system \texorpdfstring{\eqref{sysVis2}-\eqref{datsysVisc2}}{(7)-(8)} }

Let $R$ be a positive number such that $R^{1/k} > \max \{ |u_-|,|u_+| \}$. We consider the Banach space $C([-R,R])$, endowed with the supremum norm, and 
let us define the following set
\begin{small}
 $$ K = \{ U \in C([-R,R]) \, | \, U \mbox{ is monotone increasing with } U(-R) = u_- \mbox{ and } U(R) = u_+ \} $$
\end{small}
which is bounded and a convex closed set in $C([-R,R])$.
\begin{lemma}
Suppose 
$U \in K \cap C^1([-R,R])$. 
 Let 
 \begin{equation} \label{solweak1}
  \widehat{v}(\xi)=\begin{cases}
             \widehat{v}_1(\xi), &\mbox{if }-R\le \xi < \xi_\sigma^\varepsilon,\\
          \widehat{v}_2(\xi), &\mbox{if }\xi_\sigma^\varepsilon < \xi \le R,
            \end{cases}
 \end{equation}
where $\xi_\sigma^\varepsilon$ is the unique solution of the equation $(U(\xi))^{k}=\xi$  (which solution exists because $u_->u_+$ and $R$ is big enough), 
 \begin{equation} \label{solRho1}
  \widehat{v}_1(\xi):=v_-\frac{u_-^k+R}{(U(\xi))^k-\xi}\exp \left(-\int_{-R}^\xi \frac{ds}{(U(s))^k-s} \right)
 \end{equation}
 and 
 \begin{equation} \label{solRho2}
  \widehat{v}_2(\xi):=v_+\frac{R-u_+^k}{\xi-(U(\xi))^k}\exp \left(\int_\xi^R \frac{ds}{(U(s))^k-s} \right).
 \end{equation}
 Then $\widehat{v} \in L^1([-R,R])$, $v$ is continuous in $[-R,\xi_\sigma^\varepsilon) \cup (\xi_\sigma^\varepsilon,R]$ and it is a weak solution for 
 \begin{equation} \label{eq1}
	   -\xi \widehat{v}_\xi+(\widehat{v} U^k)_\xi=0,
	\end{equation} 
and $\widehat{v}(\pm R)=v_\pm$.
\end{lemma}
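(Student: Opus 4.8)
The plan is to reduce \eqref{eq1} to an explicitly solvable linear first‑order ODE, recognise \eqref{solRho1}--\eqref{solRho2} as its solution on the two sides of $\xi_\sigma^\varepsilon$, and then obtain integrability and the weak formulation from a local analysis at $\xi_\sigma^\varepsilon$. For smooth $\widehat v$ one has $-\xi\widehat v_\xi=-(\xi\widehat v)_\xi+\widehat v$, so \eqref{eq1} is equivalent to $\big((U^k-\xi)\widehat v\big)_\xi=-\widehat v$. Away from the zero of $g(\xi):=U(\xi)^k-\xi$, set $w:=g\,\widehat v$; then $w_\xi=-w/g$, hence $w(\xi)=w(\xi_0)\exp\!\big(-\int_{\xi_0}^{\xi}\frac{ds}{g(s)}\big)$ and $\widehat v=w/g$. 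This is exactly \eqref{solRho1} on $[-R,\xi_\sigma^\varepsilon)$ (normalised by $\widehat v(-R)=v_-$) and \eqref{solRho2} on $(\xi_\sigma^\varepsilon,R]$ (normalised by $\widehat v(R)=v_+$). Since $U\in C^1$ and $g$ is continuous and nonvanishing on $[-R,\xi_\sigma^\varepsilon)$ and on $(\xi_\sigma^\varepsilon,R]$, each branch $\widehat v_i$ is a well‑defined $C^1$, positive function there, a direct differentiation of \eqref{solRho1}--\eqref{solRho2} shows it solves \eqref{eq1} classically, and $\widehat v$ is continuous on $[-R,\xi_\sigma^\varepsilon)\cup(\xi_\sigma^\varepsilon,R]$. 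Evaluating at $\xi=\mp R$ makes both the exponential and the algebraic prefactor equal to $1$, so $\widehat v(\pm R)=v_\pm$.

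The core step is the $L^1$ bound near $\xi_\sigma^\varepsilon$. Because $U$ is monotone and $k$ is odd, $U^k$ is monotone, hence $g$ is strictly monotone with a single zero at $\xi_\sigma^\varepsilon$; since $g\in C^1([-R,R])$ there are constants $0<c_1\le c_2<\infty$ with $c_1|\xi-\xi_\sigma^\varepsilon|\le|g(\xi)|\le c_2|\xi-\xi_\sigma^\varepsilon|$. Therefore $\int^{\xi}\frac{ds}{g(s)}$ diverges only logarithmically as $\xi\to\xi_\sigma^\varepsilon$, and sandwiching the integrand shows that the exponential factor in \eqref{solRho1}--\eqref{solRho2} is bounded by $C\,|\xi-\xi_\sigma^\varepsilon|^{\,1/c_2}$ near $\xi_\sigma^\varepsilon$. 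Multiplying by the prefactor $1/|g(\xi)|\le 1/(c_1|\xi-\xi_\sigma^\varepsilon|)$ gives $\widehat v(\xi)\le C\,|\xi-\xi_\sigma^\varepsilon|^{\,1/c_2-1}$; since $1/c_2-1>-1$, this is integrable near $\xi_\sigma^\varepsilon$, while away from $\xi_\sigma^\varepsilon$ the function $\widehat v$ is continuous, hence bounded. Thus $\widehat v\in L^1([-R,R])$.

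For the weak formulation, look again at $w=g\,\widehat v$. On $[-R,\xi_\sigma^\varepsilon)$ one has $w(\xi)=v_-(u_-^k+R)\exp\!\big(-\int_{-R}^{\xi}\frac{ds}{g(s)}\big)\to0$ as $\xi\to\xi_\sigma^{\varepsilon-}$ (the integral tends to $+\infty$), and an analogous computation using \eqref{solRho2} gives $w(\xi)\to0$ as $\xi\to\xi_\sigma^{\varepsilon+}$ as well. Hence $w$ extends continuously to $[-R,R]$ with $w(\xi_\sigma^\varepsilon)=0$. Combining this with $w_\xi=-\widehat v$ classically on each open piece and $\widehat v\in L^1$, the identity $w(\xi)=w(-R)-\int_{-R}^{\xi}\widehat v(s)\,ds$ holds on $[-R,\xi_\sigma^\varepsilon)$, extends to $\xi_\sigma^\varepsilon$ by continuity of both sides, and then propagates to $(\xi_\sigma^\varepsilon,R]$; so $w$ is absolutely continuous on $[-R,R]$ with $w_\xi=-\widehat v$ a.e. Consequently, for every $\phi\in C_c^\infty\big((-R,R)\big)$,
\[
\int_{-R}^{R}(U^k-\xi)\,\widehat v\,\phi_\xi\,d\xi=\int_{-R}^{R}w\,\phi_\xi\,d\xi=-\int_{-R}^{R}w_\xi\,\phi\,d\xi=\int_{-R}^{R}\widehat v\,\phi\,d\xi ,
\]
which, after the rewriting $-\xi\widehat v_\xi=-(\xi\widehat v)_\xi+\widehat v$ used above, is exactly the weak form of $-\xi\widehat v_\xi+(\widehat v U^k)_\xi=0$; together with $\widehat v(\pm R)=v_\pm$ this proves the lemma.

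The main obstacle I anticipate is precisely the estimate near $\xi_\sigma^\varepsilon$: one must first argue that $U(\xi)^k-\xi$ has a genuine simple (two‑sidedly linear) zero there, and then control the competition between the blow‑up of $1/(U^k-\xi)$ and the decay of $\exp\!\big(-\int ds/(U^k-s)\big)$. The pleasant fact that makes everything work is that the exponential wins, so that $w=(U^k-\xi)\widehat v$ not only stays bounded but vanishes at $\xi_\sigma^\varepsilon$ — which is also what guarantees that no singular (delta) term is produced in \eqref{eq1} at this stage; the concentration will appear only later, when this $\widehat v$ is fed back into the full system.
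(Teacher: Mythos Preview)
Your argument is correct and follows the same overall route as the paper: rewrite \eqref{eq1} as $((U^k-\xi)\widehat v)_\xi=-\widehat v$, identify the explicit branches, show that the flux $w=(U^k-\xi)\widehat v$ vanishes at $\xi_\sigma^\varepsilon$, and deduce both $L^1$--integrability and the weak identity. The one place where you diverge is the $L^1$ step. You bound $\widehat v$ pointwise by $C|\xi-\xi_\sigma^\varepsilon|^{1/c_2-1}$, which is valid but relies on the two--sided linear estimate $c_1|\xi-\xi_\sigma^\varepsilon|\le |g(\xi)|\le c_2|\xi-\xi_\sigma^\varepsilon|$; the lower bound does \emph{not} follow from $g\in C^1$ alone (think of $g(\xi)=(\xi-\xi_\sigma^\varepsilon)^3$), so you should add the one--line observation that here $g'=kU^{k-1}U'-1\le -1$, making the zero genuinely simple. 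The paper sidesteps this by computing the primitive directly: setting $p(\xi)=\int_{-R}^\xi\widehat v_1\,ds$ and solving $g\,p'+p=A_1$ gives $p(\xi)=A_1\bigl(1-\exp(-\int_{-R}^\xi ds/g)\bigr)$, and then only the \emph{upper} bound $g=O(|\xi-\xi_\sigma^\varepsilon|)$ is needed to conclude $p(\xi)\to A_1$, hence $\widehat v_1\in L^1$ with total mass exactly $A_1$. Your verification of the weak form via absolute continuity of $w$ is equivalent to (and arguably tidier than) the paper's excision--and--limit argument on $[\xi_1,\xi_2]$; both hinge on the same fact $w(\xi_\sigma^{\varepsilon\pm})=0$.
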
 
\begin{proof}
 The equation \eqref{eq1} can be rewritten as
 \begin{equation} \label{eq4}
  ((U(\xi))^k-\xi)\widehat{v}'+\widehat{v}((U(\xi))^k)'=0.
 \end{equation}
 Integrating \eqref{eq4} on $[-R,\xi]$ for $-R<\xi<\xi_\sigma^\varepsilon$ , we get
 \begin{equation} \label{eq5}
  ((U(\xi))^k-\xi)\widehat{v}_1(\xi)-(u_-^k+R)v_-+  \int_{-R}^\xi \widehat{v}_1(s)ds=0.
 \end{equation}
 Let
 $$ p(\xi)=\int_{-R}^\xi \widehat{v}_1(s)ds, \quad A_1=(u_-^k+R)v_- \quad \mbox{ and } a(\xi)=((U(\xi))^k-\xi). $$
 Then \eqref{eq5} can be written as
 \begin{equation*}
  \begin{cases}
   a(\xi)p'(\xi)+p(\xi)=A_1,\\
   p(-R)=0.
  \end{cases}
 \end{equation*}
It follows that
$$ p(\xi)=A_1 \left\{ 1-\exp \left( -\int_{-R}^\xi \frac{ds}{a(s)} \right) \right\}. $$
Noticing that $a(\xi)>0$ and $a(\xi)=O(|\xi-\xi_\sigma|)$ as $\xi \to \xi_\sigma^\varepsilon-$, we obtain
\begin{equation} \label{eqA}
 \lim_{\xi \to \xi_\sigma^\varepsilon-} \int_{-R}^\xi \widehat{v}_1(s) ds = \lim_{\xi \to \xi_\sigma^\varepsilon-} p(\xi)=A_1. 
\end{equation}
Hence
\begin{equation} \label{eqEst1}
 \lim_{\xi \to \xi_\sigma^\varepsilon-} ((U(\xi))^k-\xi)\widehat{v}_1(\xi)=0.
\end{equation}
Similarly, one can get
\begin{align}
 \lim_{\xi \to \xi_\sigma^\varepsilon+} \int_{\xi}^R \widehat{v}_2(s) ds =A_2, \label{eqB}\\
 \lim_{\xi \to \xi_\sigma^\varepsilon+} ((U(\xi))^k-\xi)\widehat{v}_2(\xi)=0, \nonumber
\end{align}
where $A_2=(u_+^k-R)v_+$. The equalities \eqref{eqA} and \eqref{eqB} imply that $\widehat{v}(\xi) \in L^1([-R, R])$.\\

Now, for arbitrary $\phi \in C_0^\infty ([-R,R])$, we verify that
\begin{equation*}
 I \equiv -\int_{-R}^R ((U(\xi))^k-\xi)\widehat{v}(\xi) \phi'(\xi) d\xi +  \int_{-R}^R \widehat{v}(\xi)\phi(\xi) d\xi=0.
\end{equation*}
For any $\xi_1, \xi_2$ , such that $-R<\xi_1 <\xi_\sigma^\varepsilon <\xi_2 <R$  we can write $I=I_1+I_2+I_3$, where 
\begin{align*}
 I_1&= \int_{-R}^{\xi_1} (-((U(\xi))^k-\xi)\widehat{v}(\xi) \phi'(\xi)+\widehat{v}(\xi)\phi(\xi)) d\xi,\\
 I_2&= \int_{\xi_1}^{\xi_2} (-((U(\xi))^k-\xi)\widehat{v}(\xi) \phi'(\xi)+\widehat{v}(\xi)\phi(\xi)) d\xi \mbox{ and }\\
 I_3&= \int_{\xi_2}^{R} (-((U(\xi))^k-\xi)\widehat{v}(\xi) \phi'(\xi)+\widehat{v}(\xi)\phi(\xi)) d\xi.
\end{align*}
Observe that 
\begin{align*}
 |I_1|&=\left| -((U(\xi_1))^k-\xi_1)\widehat{v}_1(\xi_1)\phi(\xi_1)+\int_{-R}^{\xi_1} ((((U(\xi))^k-\xi)\widehat{v}(\xi))' \phi(\xi)+\widehat{v}(\xi)\phi(\xi))) d\xi \right|\\
 &=\left| ((U(\xi_1))^k-\xi_1)\widehat{v}_1(\xi_1)\phi(\xi_1) \right|.
\end{align*}
By \eqref{eqEst1}, we have that $$\lim_{\xi_1 \to \xi_\sigma^\varepsilon-} |I_1|=\lim_{\xi_1 \to \xi_\sigma^\varepsilon-} \left| ((U(\xi_1))^k-\xi_1)\widehat{v}_1(\xi_1)\phi(\xi_1) \right|=0.$$
Similarlly, we show that 
$$\lim_{\xi_2 \to \xi_\sigma^\varepsilon+} |I_3|=\lim_{\xi_2 \to \xi_\sigma^\varepsilon+} \left| ((U(\xi_2))^k-\xi_2)\widehat{v}_2(\xi_2)\phi(\xi_2) \right|=0.$$
Since $\widehat{v} \in L^1([-R,R])$, 
$$ |I_2| \le \int_{\xi_1}^{\xi_2} |-((U(\xi))^k-\xi)\phi'(\xi)+\phi(\xi)||\widehat{v}(\xi)|d\xi \to 0, \quad \mbox{ as } \xi_1\to \xi_\sigma^\varepsilon-, \xi_2\to \xi_\sigma^\varepsilon+. $$
But $I$ is independent of $\xi_1$ and $\xi_2$, so $I=0$. Therefore, $\widehat{v}$ defined in \eqref{solweak1} is a weak solution.
\end{proof}

Now, let us define an operator $T: K \to C^2([-R,R])$ as follows: for any $U \in K$, $\widehat{u}=TU$ is the unique solution of the boundary value problem
    \begin{equation} \label{PRegA}
     \begin{cases}
      \varepsilon \widehat{u}''=\left( \widehat{v}(U,\xi)((U(\xi))^k-\xi) \right) \widehat{u}',\\
      \widehat{u}(\pm R)=u_\pm,
     \end{cases}
    \end{equation}
where $\widehat{v}(U,\xi) \equiv \widehat{v}(\xi)$ is defined in \eqref{solRho1} or \eqref{solRho2}.
 In fact, the solution to this problem can be found explicitly and it is given by
\begin{equation} \label{Sol1u}
 \widehat{u}(\xi)=u_- + \frac{(u_+-u_-)\int_{-R}^\xi \exp \left( \int_{-R}^r \frac{\widehat{v}(U,s)((U(s))^k-s)}{\varepsilon} ds \right)dr}{\int_{-R}^R \exp \left( \int_{-R}^r \frac{\widehat{v}(U,s)((U(s))^k-s)}{\varepsilon} ds \right)dr}.
\end{equation}

\begin{lemma}
  $T:K \to K$ is a continuous operator.
 \end{lemma}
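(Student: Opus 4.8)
The plan is to read off everything from the explicit representation \eqref{Sol1u}. Write $g_U(\xi):=\widehat v(U,\xi)\,((U(\xi))^k-\xi)$, $G_U(r):=\int_{-R}^r g_U(s)\,ds$ and $E_U(r):=\exp\!\left(\frac{1}{\varepsilon}\,G_U(r)\right)$, so that $\widehat u=TU$ reads $\widehat u(\xi)=u_-+(u_+-u_-)\,\frac{\int_{-R}^\xi E_U}{\int_{-R}^R E_U}$. The preceding lemma is exactly what makes this well posed: the apparent singularity of $\widehat v(U,\cdot)$ at $\xi_\sigma^\varepsilon$ cancels in $g_U$, since $g_U$ equals $v_-(u_-^k+R)\exp\!\left(-\int_{-R}^\xi \frac{ds}{(U(s))^k-s}\right)$ to the left of $\xi_\sigma^\varepsilon$ and the analogous expression built from \eqref{solRho2} to the right; both branches tend to $0$ at $\xi_\sigma^\varepsilon$ (by \eqref{eqEst1} and its right-hand counterpart), so $g_U$ is continuous on $[-R,R]$ and satisfies $|g_U|\le M$ with $M:=\max\{v_-(u_-^k+R),\,v_+(R-u_+^k)\}$, a bound \emph{independent of} $U\in K$. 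Hence $G_U$ is $M$-Lipschitz with $|G_U|\le 2RM$, so $e^{-2RM/\varepsilon}\le E_U\le e^{2RM/\varepsilon}$ and $\int_{-R}^R E_U\ge 2R\,e^{-2RM/\varepsilon}>0$, which shows \eqref{Sol1u} is meaningful and $TU\in C^2([-R,R])$. Evaluating \eqref{Sol1u} at $\pm R$ gives $(TU)(\pm R)=u_\pm$, and $(TU)'(\xi)=(u_+-u_-)\,\frac{E_U(\xi)}{\int_{-R}^R E_U}$ has constant sign, so $TU$ is monotone in the direction prescribed by the definition of $K$; thus $T(K)\subseteq K$.

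For continuity, let $U_n\to U$ in $C([-R,R])$ with $U_n,U\in K$. It suffices to prove $E_{U_n}\to E_U$ uniformly on $[-R,R]$: then the numerators $\int_{-R}^\xi E_{U_n}$ converge to $\int_{-R}^\xi E_U$ uniformly in $\xi$, the denominators converge to the strictly positive number $\int_{-R}^R E_U$, and therefore $TU_n\to TU$ in $C([-R,R])$. Since $|G_{U_n}(r)-G_U(r)|\le\|g_{U_n}-g_U\|_{L^1([-R,R])}$ and $\exp$ is uniformly continuous on the fixed bounded interval $[-2RM/\varepsilon,\,2RM/\varepsilon]$, it is enough to prove $g_{U_n}\to g_U$ in $L^1([-R,R])$.

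To get the $L^1$ convergence I would first check $\xi_\sigma^\varepsilon(U_n)\to\xi_\sigma^\varepsilon(U)$: each is the unique root of a continuous function that is positive at $-R$ and negative at $R$, and uniform convergence of these functions forces the roots to converge. Then fix $\delta>0$. On $[-R,\xi_\sigma^\varepsilon(U)-\delta]$ and on $[\xi_\sigma^\varepsilon(U)+\delta,R]$, for all large $n$ the appropriate branch of the formula for $g_{U_n}$ applies and the factors $(U_n(s))^k-s$ are bounded away from $0$ uniformly in $n$; hence $g_{U_n}\to g_U$ uniformly on these two sets. On the remaining interval of length $2\delta$ one uses only $|g_{U_n}-g_U|\le 2M$. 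Consequently $\limsup_n\|g_{U_n}-g_U\|_{L^1}\le 4M\delta$ for every $\delta>0$, which yields the claim, and the chain of reductions above closes the proof.

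The main obstacle is precisely this $L^1$ convergence $g_{U_n}\to g_U$; everything downstream (composition with $\exp$ on a bounded range, the uniform positive lower bound on the denominator, passing to the limit under the integral sign) is routine, and Step 1 is just bookkeeping on \eqref{Sol1u}. What makes the obstacle tractable is (i) the continuous dependence of the singular location $\xi_\sigma^\varepsilon$ on $U$, and (ii) the fact — already supplied by the preceding lemma — that $\widehat v$ enters \eqref{Sol1u} only through the bounded, singularity-free combination $((U(\xi))^k-\xi)\,\widehat v(U,\xi)$, so that the $L^1$-type blow-up of $\widehat v$ itself never appears and a uniform sup-bound together with uniform convergence off a small neighbourhood of $\xi_\sigma^\varepsilon$ is all that is needed.
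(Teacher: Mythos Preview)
Your proof is correct and self-contained. It differs from the paper's in two respects. First, the paper writes down the linear ODE satisfied by the difference $\widehat u_n-\widehat u$ (with forcing term $q_n=(g_{U_n}-g_U)\widehat u'$) and its explicit integral representation, whereas you compare the closed formulas \eqref{Sol1u} for $TU_n$ and $TU$ directly. Second, and more substantively, for the convergence of $g_{U_n}:=\widehat v(U_n,\cdot)\bigl((U_n)^k-\cdot\bigr)$ to $g_U$ the paper exploits the structural identity $g_U'=-\widehat v<0$ (coming from \eqref{eq1}): each $g_{U_n}$ is monotone decreasing and continuous, and the Dini-type fact that a sequence of monotone functions converging to a continuous limit must converge uniformly is invoked to get $q_n\to 0$ uniformly, hence $\widehat u_n\to\widehat u$ in $C^2$. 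You bypass monotonicity altogether and obtain $L^1$-convergence of $g_{U_n}$ by hand: convergence of the roots $\xi_\sigma^\varepsilon(U_n)\to\xi_\sigma^\varepsilon(U)$, uniform convergence away from the root (where the denominators $(U_n(s))^k-s$ are bounded away from zero uniformly in $n$), and the uniform sup-bound $|g_{U_n}|\le M$ on a shrinking neighbourhood of the root. Your route is more elementary and makes explicit a pointwise-convergence step the paper leaves implicit; the paper's route is shorter once the monotonicity of $g_U$ is noticed and also delivers the stronger $C^2$ convergence (not required for the present lemma). Both arguments rest on the same crucial observation that the $L^1$-singularity of $\widehat v$ disappears in the product $\bigl((U)^k-\xi\bigr)\widehat v$, which is the only combination through which $\widehat v$ enters \eqref{Sol1u}.
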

\begin{proof}
 Choose $\{ U_n\}$ in $K$ such that $U_n \to U$. As $U$ belongs to $K$, then each $\widehat{u}_n=TU_n$ and $\widehat{u}=TU$ satisfy the problem \eqref{PRegA}. Now, we have the following problem
 \begin{small}
  \begin{equation} \label{PRegB}
  \begin{cases}
   \varepsilon (\widehat{u}_n-\widehat{u})''=(\widehat{v}(U_n,\xi)((U_n(\xi))^k-\xi))(\widehat{u}_n-\widehat{u})' \\
  \qquad \qquad \qquad +(\widehat{v}(U_n,\xi)((U_n(\xi))^k-\xi) - \widehat{v}(U,\xi)((U(\xi))^k-\xi))\widehat{u}'\\
   (\widehat{u}_n-\widehat{u})(\pm R)=0.
  \end{cases}
 \end{equation}
 \end{small}
 Setting $p_n(\xi)=\widehat{v}(U_n,\xi)((U_n(\xi))^k-\xi)$ and $q_n(\xi)=(\widehat{v}(U_n,\xi)((U_n(\xi))^k-\xi) - \widehat{v}(U,\xi)((U(\xi))^k-\xi))u'$, from problem \eqref{PRegB} we have
 \begin{align}
  (\widehat{u}_n-\widehat{u})'(\xi)=&-\frac{\int_{-R}^R \int_{-R}^y \frac{q_n(r)}{\varepsilon} \exp \left( \int_r^y \frac{p_n(s)}{\varepsilon} ds \right) dr dy}{\int_{-R}^R \exp \left( \int_{-R}^r \frac{p_n(s)}{\varepsilon}ds \right)dr} \exp \left( \int_{-R}^\xi \frac{p_n(s)}{\varepsilon} ds \right) \nonumber \\
  &+\int_{-R}^\xi \frac{q_n(r)}{\varepsilon} \exp \left( \int_{-r}^\xi \frac{p_n(s)}{\varepsilon} ds \right) dr \label{eqSeq1}
 \end{align}
 and
 \begin{small}
 \begin{align}
  (\widehat{u}_n-\widehat{u})(\xi)=&-\frac{\int_{-R}^R \int_{-R}^y \frac{q_n(r)}{\varepsilon} \exp \left( \int_r^y \frac{p_n(s)}{\varepsilon} ds \right) dr dy}{\int_{-R}^R \exp \left( \int_{-R}^r \frac{p_n(s)}{\varepsilon}ds \right)dr} \int_{-R}^\xi \exp \left( \int_{-R}^r \frac{p_n(s)}{\varepsilon} ds \right)dr \nonumber \\
  &+\int_{-R}^\xi \int_{-R}^y \frac{q_n(r)}{\varepsilon} \exp \left( \int_{-r}^y \frac{p_n(s)}{\varepsilon} ds \right) dr dy. \label{eqSeq2}
 \end{align}
 \end{small}
 From \eqref{eq1}, we have
 \begin{align*}
  (((U(\xi))^k-\xi)\widehat{v}(\xi))'=-\widehat{v}(\xi) <0,\\
  (((U_n(\xi))^k-\xi)\widehat{v}_n(\xi))'=-\widehat{v}_n(\xi) <0 \qquad \mbox{for }n=1,2,\dots.
 \end{align*}
Then, $\widehat{v}(U^k-\xi)$ and $\widehat{v}_n(U_n^k-\xi)$, $n=1, 2,\dots$, are monotone decreasing and continuous functions. Because the sequence of monotone functions 
which converges to a continuous function must converge uniformly, we get that $q_n(\xi)$ converges to zero uniformly. Then, from \eqref{PRegB}, \eqref{eqSeq1} and \eqref{eqSeq2}  it follows that
$$ \widehat{u}_n \to \widehat{u} \ \mbox{in }C^2([-R,R]), \mbox{ as } n \to \infty. $$
Therefore $T:K \to C^2([-R,R])$ is continuous. In addition, from \eqref{Sol1u}, we have
$$ \widehat{u}'(\xi)=\frac{(u_+-u_-)\exp \left( \int_{-R}^\xi \frac{\widehat{v}(U,s)((U(s))^k-s)}{\varepsilon} ds \right)}{\int_{-R}^R \exp \left( \int_{-R}^r \frac{\widehat{v}(U,s)((U(s))^k-s)}{\varepsilon} ds \right)dr} $$
which implies that $\widehat{u}=TU$ is monotone. So we get $TK \subset K$.
\end{proof}

\begin{lemma}
 $TK$ is a bounded set in $C^2([-R,R])$.
\end{lemma}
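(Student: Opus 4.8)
The plan is to exhibit constants --- allowed to depend on $\varepsilon$, $R$, $\alpha$, $k$ and the fixed data $v_\pm,u_\pm$, but \emph{not} on $U$ --- bounding $\widehat{u}=TU$, $\widehat{u}'$ and $\widehat{u}''$ in the supremum norm. The $C^0$ part is essentially free: by the previous lemma $TK\subset K$, so $\widehat{u}$ is monotone with $\widehat{u}(-R)=u_-$ and $\widehat{u}(R)=u_+$, whence $\|\widehat{u}\|_{C^0([-R,R])}\le\max\{|u_-|,|u_+|\}$. The two derivatives will then be controlled through the explicit representation \eqref{Sol1u} and the equation in \eqref{PRegA}, once the coefficient of that equation has been bounded uniformly in $U$.

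The crucial step --- and the one I expect to be the only real obstacle --- is a bound, independent of $U\in K$, on the function $g(\xi):=\widehat{v}(U,\xi)\,((U(\xi))^k-\xi)$. The point is that $\widehat{v}(U,\cdot)$ itself is in general \emph{not} uniformly bounded: it may blow up as $\xi\to\xi_\sigma^\varepsilon$, which is precisely the delta-shock mechanism we are after, so one cannot simply bound $g$ factor by factor. Instead I would exploit the structure already in hand. By \eqref{eqEst1} and its right-hand counterpart, $g$ extends continuously to all of $[-R,R]$ with $g(\xi_\sigma^\varepsilon)=0$; from \eqref{eq1} one has $g'(\xi)=-\widehat{v}(\xi)<0$ on each of the two subintervals (recall $\widehat{v}>0$ by \eqref{solRho1}--\eqref{solRho2}), so $g$ is continuous and strictly decreasing on the whole of $[-R,R]$. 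Evaluating \eqref{solRho1} and \eqref{solRho2} at the endpoints gives $\widehat{v}_1(-R)=v_-$ and $\widehat{v}_2(R)=v_+$, hence $g(-R)=(u_-^k+R)v_-$ and $g(R)=(u_+^k-R)v_+$, both independent of $U$. Monotonicity then forces $(u_+^k-R)v_+\le g(\xi)\le(u_-^k+R)v_-$ for every $\xi$ and every $U\in K$, so $|g|\le M_0:=\max\{(u_-^k+R)v_-,\ (R-u_+^k)v_+\}$; here $M_0$ is a genuine positive constant because the choice $R^{1/k}>\max\{|u_-|,|u_+|\}$ makes $u_-^k+R>0$ and $R-u_+^k>0$.

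With $M_0$ fixed the remaining estimates are routine. Put $G(\xi)=\tfrac1\varepsilon\int_{-R}^{\xi}g(s)\,ds$, so that $|G(\xi)|\le 2RM_0/\varepsilon=:M_1$. From \eqref{Sol1u},
\[
\widehat{u}'(\xi)=\frac{(u_+-u_-)\,e^{G(\xi)}}{\int_{-R}^{R}e^{G(r)}\,dr},
\]
and since $e^{G(\xi)}\le e^{M_1}$ while $\int_{-R}^{R}e^{G(r)}\,dr\ge 2R\,e^{-M_1}$, we get $\|\widehat{u}'\|_{C^0}\le\frac{|u_+-u_-|}{2R}\,e^{2M_1}$. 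Finally, the equation in \eqref{PRegA} reads $\varepsilon\,\widehat{u}''=g\,\widehat{u}'$, so $\|\widehat{u}''\|_{C^0}\le\frac{M_0}{\varepsilon}\,\|\widehat{u}'\|_{C^0}\le\frac{M_0\,|u_+-u_-|}{2\varepsilon R}\,e^{2M_1}$. All three bounds are independent of $U\in K$ (they do involve the fixed parameter $\varepsilon$, which is harmless at this stage), so $TK$ is bounded in $C^2([-R,R])$, and in fact it also follows that $\widehat u \in C^2$ since $g$ is continuous.
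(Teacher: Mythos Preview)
Your proof is correct and follows essentially the same strategy as the paper's: obtain a $U$-independent bound on $g(\xi)=\widehat{v}(U,\xi)\bigl((U(\xi))^k-\xi\bigr)$ from its monotonicity $g'=-\widehat{v}<0$ and its endpoint values $g(-R)=v_-(u_-^k+R)$, $g(R)=v_+(u_+^k-R)$, and then read off uniform bounds on $\widehat{u}'$ and $\widehat{u}''$. The only cosmetic difference is that the paper first bounds $\widehat{u}'(-R)$ via a mean-value argument and then propagates through $\widehat{u}'(\xi)=\widehat{u}'(-R)\,\exp\bigl(\tfrac{1}{\varepsilon}\int_{-R}^{\xi}g\bigr)$, whereas you estimate the numerator and denominator of the explicit formula for $\widehat{u}'$ directly; your route is slightly more streamlined and equally valid for the purpose of this lemma.
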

\begin{proof}
For any $U \in K$, if $s<\xi_\sigma^\varepsilon$, we have
\begin{equation} \label{eqEstim1}
 0<\widehat{v}(U,s)((U(s))^k-s)=\widehat{v}_-(u_-^k+R)-\int_{-R}^s \widehat{v}(r)dr <\widehat{v}_-(u_-^k+R)
\end{equation}
and if $s>\xi_\sigma^\varepsilon$,
\begin{equation} \label{eqEstim2}
 0>\widehat{v}(U,s)((U(s))^k-s)=\widehat{v}_+(u_+^k-R)+\int_{s}^R \widehat{v}(r)dr >\widehat{v}_+(u_+^k-R).
\end{equation}
From \eqref{PRegA}, we can deduce that
$$ \widehat{u}''(\xi) <0, \quad \xi \in [-R,\xi_\sigma^\varepsilon). $$
Then, $\widehat{u}'(\xi)\le \widehat{u}'(-R)<0$, $\xi \in [-R,\xi_\sigma^\varepsilon)$, and
\begin{align*}
 u_--u_+>\widehat{u}(-R)-\widehat{u}(\xi_\sigma)=\widehat{u}'(\zeta)(-R-\xi_\sigma)>\widehat{u}'(\zeta)(-R-u_+^k), \qquad \zeta \in (-R, \xi_\sigma).
\end{align*}
Thus,
$$ 0>\widehat{u}'(-R)>\widehat{u}'(\zeta)>-\frac{u_--u_+}{R+u_+^k}. $$
Also, from \eqref{PRegA} we have
$$ \widehat{u}'(\xi)=\widehat{u}'(-R)\exp \left(\int_{-R}^\xi \frac{\widehat{v}(U^k-s)}{\varepsilon}ds \right) $$
and by \eqref{eqEstim1} and \eqref{eqEstim2}, we conclude that $\widehat{u}'$ is uniformly bounded.
Consequently, $\widehat{u}''$ is also uniformly bounded. So, $TK$ is a bounded set in $C^2([-R,R])$.
\end{proof}

\begin{lemma}
 $TK$ is precompact in $C([-R,R])$.
\end{lemma}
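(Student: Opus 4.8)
The plan is to invoke the Arzel\`a--Ascoli theorem, using the uniform $C^2$-bound on $TK$ established in the previous lemma. First I would record that, by that lemma, there is a constant $M>0$ (depending only on $v_\pm$, $u_\pm$, $R$, $\varepsilon$, $\alpha$, $k$, but not on $U\in K$) such that $\|\widehat{u}\|_{C^2([-R,R])}\le M$ for every $\widehat{u}=TU$ with $U\in K$; in particular $\|\widehat{u}\|_{\infty}\le M$ and $\|\widehat{u}'\|_{\infty}\le M$.

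Next I would check the two hypotheses of Arzel\`a--Ascoli for the family $TK\subset C([-R,R])$. Uniform boundedness is immediate from $\|\widehat{u}\|_{\infty}\le M$. For equicontinuity, given any $\widehat{u}=TU\in TK$ and any $\xi_1,\xi_2\in[-R,R]$, the mean value theorem together with the uniform bound on the derivative yields
\begin{equation*}
|\widehat{u}(\xi_1)-\widehat{u}(\xi_2)|=|\widehat{u}'(\zeta)|\,|\xi_1-\xi_2|\le M\,|\xi_1-\xi_2|,
\end{equation*}
so the family $TK$ is in fact uniformly Lipschitz with constant $M$, hence equicontinuous. (Alternatively one can read the Lipschitz bound directly off the explicit formula \eqref{Sol1u} for $\widehat{u}'$, but routing it through the $C^2$-bound of the preceding lemma is cleanest.)

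Finally, by the Arzel\`a--Ascoli theorem, every sequence in $TK$ has a subsequence converging uniformly on $[-R,R]$, i.e. converging in the norm of $C([-R,R])$; therefore $TK$ is precompact (relatively compact) in $C([-R,R])$. I do not expect any genuine obstacle here: the content of the statement is entirely carried by the uniform derivative bound already proved, and the remaining argument is the standard equicontinuity-plus-Arzel\`a--Ascoli routine. The one point worth stating carefully is that the constant $M$ is independent of $U\in K$, so that the equicontinuity is uniform over the whole family, which is exactly what the previous lemma provides.
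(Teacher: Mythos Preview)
Your proposal is correct and is essentially the paper's argument unpacked: the paper's proof is the single line ``This is a consequence of the compact embedding $C^2([-R,R]) \hookrightarrow C([-R,R])$,'' which is exactly the uniform-$C^2$-bound-plus-Arzel\`a--Ascoli reasoning you spell out. There is no substantive difference between the two.
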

\begin{proof}
 This is a consequence of the compact embedding $C^2([-R,R]) \hookrightarrow C([-R,R])$.
\end{proof}

From the above lemmas, by virtue of Schauder fixed point theorem, we
get the following result.
\begin{theorem} \label{Thm3.5}
 For each $R > \max \{ |u_-|^k,|u_+|^k \}$, there exists a weak solution $$ (\widehat{v}_R,\widehat{u}_R) \in L^1([-R, R]) \times C^2([-R, R]) $$
for the system \eqref{sysVis2} with boundary value $(\widehat{v}_R(\pm R), \widehat{u}_R(\pm R))=(v_\pm, u_\pm)$, and, in addition, being $\widehat{u}_R$ a decreasing function.
\end{theorem}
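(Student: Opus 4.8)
The plan is to read this statement as a direct application of the Schauder fixed point theorem, with all the analytic content already contained in Lemmas 1--4. First I recall the setup: $K$ is a bounded, convex, closed subset of the Banach space $C([-R,R])$ with the supremum norm; the operator $T$ defined through the boundary value problem \eqref{PRegA} (equivalently, through the explicit formula \eqref{Sol1u}) maps $K$ continuously into $K$ by Lemma 2; and $T(K)$ is precompact in $C([-R,R])$ by Lemma 4. Hence Schauder's theorem yields a fixed point $\widehat{u}_R \in K$ with $T\widehat{u}_R = \widehat{u}_R$.

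Next I would upgrade the regularity and assemble the pair. Since $T$ takes values in $C^2([-R,R])$ by its very definition, the fixed point satisfies $\widehat{u}_R \in C^2([-R,R])$, so in particular $\widehat{u}_R \in K \cap C^1([-R,R])$ and Lemma 1 applies with $U=\widehat{u}_R$. Setting $\widehat{v}_R := \widehat{v}(\widehat{u}_R,\cdot)$ as in \eqref{solweak1}--\eqref{solRho2}, Lemma 1 gives $\widehat{v}_R \in L^1([-R,R])$ with $\widehat{v}_R(\pm R)=v_\pm$ and $\widehat{v}_R$ a weak solution of the first equation $-\xi\widehat{v}_\xi+(\widehat{v}\widehat{u}_R^k)_\xi=0$; and the fixed point relation, read back through \eqref{PRegA}, is the pointwise identity
\[
\varepsilon\,\widehat{u}_R''(\xi)=\widehat{v}_R(\xi)\bigl((\widehat{u}_R(\xi))^k-\xi\bigr)\widehat{u}_R'(\xi)\qquad\text{on }[-R,R],
\]
together with $\widehat{u}_R(\pm R)=u_\pm$.

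It then remains to verify that $(\widehat{v}_R,\widehat{u}_R)$ solves the second equation of \eqref{sysVis2} weakly, and here lies the only delicate point. The key observation is that the weak form of the first equation established in Lemma 1, namely $\int_{-R}^R\bigl[-((\widehat{u}_R^k-\xi)\widehat{v}_R)\psi'+\widehat{v}_R\psi\bigr]\,d\xi=0$, extends by density from $\psi\in C_0^\infty([-R,R])$ to every $\psi\in C_0^1([-R,R])$, because $\widehat{v}_R\in L^1$ and $(\widehat{u}_R^k-\xi)\widehat{v}_R\in L^\infty$ (by \eqref{eqEstim1}--\eqref{eqEstim2}) make both terms continuous in $\psi$ for the $C^1$ norm. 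Testing the second equation against $\phi\in C_0^\infty([-R,R])$, using $\widehat{u}_R\in C^2$ to integrate the viscous term by parts and inserting the displayed ODE, the required identity reduces, after grouping terms, to $-\int(\widehat{u}_R^k-\xi)\widehat{v}_R(\widehat{u}_R\phi)'+\int\widehat{v}_R\widehat{u}_R\phi=0$, which is exactly the extended weak form of the first equation applied to the admissible test function $\psi=\widehat{u}_R\phi$. This is the step that needs care, since $\widehat{v}_R$ is only $L^1$ near $\xi_\sigma^\varepsilon$; the splitting of the integral around $\xi_\sigma^\varepsilon$ and the boundedness \eqref{eqEst1} of $(\widehat{u}_R^k-\xi)\widehat{v}_R$ there are what make it go through, precisely as in the proof of Lemma 1.

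Finally, for the monotonicity claim I would use the explicit derivative obtained in the proof of Lemma 2,
\[
\widehat{u}_R'(\xi)=\frac{(u_+-u_-)\exp\!\left(\int_{-R}^\xi\frac{\widehat{v}_R(s)((\widehat{u}_R(s))^k-s)}{\varepsilon}\,ds\right)}{\int_{-R}^R\exp\!\left(\int_{-R}^r\frac{\widehat{v}_R(s)((\widehat{u}_R(s))^k-s)}{\varepsilon}\,ds\right)dr},
\]
whose sign is that of $u_+-u_-$; since the delta-shock regime under consideration is $u_->u_+$ (already invoked in Lemma 1 to guarantee the existence of $\xi_\sigma^\varepsilon$), this forces $\widehat{u}_R'(\xi)<0$ on all of $[-R,R]$, so $\widehat{u}_R$ is strictly decreasing. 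I expect no genuine obstacle beyond the bookkeeping at $\xi_\sigma^\varepsilon$ in the weak formulation of the coupled system; the $C^2$ regularity of $\widehat{u}_R$ reduces the verification of the second equation to the algebraic manipulation above together with one density argument.
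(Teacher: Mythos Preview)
Your proof is correct and follows the same route as the paper: the paper's own proof of this theorem consists of the single sentence ``From the above lemmas, by virtue of Schauder fixed point theorem, we get the following result,'' so your application of Schauder via Lemmas~1--4 is exactly what is intended. Your additional verification that the fixed point actually produces a weak solution of the \emph{second} equation of \eqref{sysVis2} (by reducing it, through the ODE \eqref{PRegA} and the identity $(\widehat{u}_R\phi)'=\widehat{u}_R'\phi+\widehat{u}_R\phi'$, to the weak form of the first equation tested against $\psi=\widehat{u}_R\phi$) is a detail the paper leaves entirely implicit; your density argument from $C_0^\infty$ to $C_0^1$ test functions, justified by $(\widehat{u}_R^k-\xi)\widehat{v}_R\in L^\infty$, is the right way to make that step honest.
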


The next step is to obtain from this family of solutions a sequence $R_k \to \infty$ such that $(\widehat{v}_{R_k}, \widehat{u}_{R_k})$ converges to a weak solution of \eqref{sysVis2}--\eqref{datsysVisc2}.
To this end, we need the following lemma.
\begin{lemma} \label{lemma3.6}
\begin{enumerate}
\item $\widehat{u}_R(\xi)$, $\widehat{u}'_R(\xi)$ and $\widehat{u}''_R(\xi)$ are uniformly bounded, with respect to $R$ and $\xi\in [-R,R]$.
\item There exist a sequence $R_k\to\infty$ and a decreasing function $\widehat{u} \in C^1(\mathbb{R})$ such that $\widehat{u}_{R_k}$ converges to $\widehat{u}$ in $C^1([-M,M])$, for each positive number $M$ (i.e. $\widehat{u}_{R_k}$, $\widehat{u}'_{R_k}$ converge uniformly in compact sets of $\mathbb{R}$ to $\widehat{u}$, $\widehat{u}'$, respectively). 
\item $\widehat{v}_{R_k}(\widehat{u}_{R_k},\xi)$ converges to $\widehat{v}(\widehat{u},\xi)$, as $R_k\to\infty$, for each $\xi \in \mathbb{R} \setminus \{\xi_\sigma^\varepsilon \}$, where $\widehat{v}_{R_k}(\widehat{u}_{R_k},\xi)$, $\widehat{v}(\widehat{u},\xi)$ are defined accordingly with \eqref{solweak1}, \eqref{solRho1} and \eqref{solRho2}, being $R=\infty$ for $\rho(u,\xi)$, and $\xi_{\sigma}^\varepsilon$ satisfies $(\widehat{u}(\xi_\sigma^\varepsilon))^k=\xi_\sigma^\varepsilon$. 
\end{enumerate}
\end{lemma}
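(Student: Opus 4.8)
The plan is to handle the three items in order; the first is the substance, the other two are compactness and a limit computation.

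\emph{Item 1.} The zeroth‑order bound $u_+\le\widehat u_R\le u_-$ is immediate from the monotonicity of $\widehat u_R$ and its boundary values, and the root $\xi_\sigma^\varepsilon=\xi_\sigma^\varepsilon(R)$ of $(\widehat u_R(\xi))^k=\xi$ always lies in the fixed interval $[u_+^k,u_-^k]$, so no $R$‑dependence enters there. For $\widehat u_R'$ I would start from \eqref{Sol1u}: putting $g_R(\xi)=\tfrac1\varepsilon\int_{-R}^{\xi}\widehat v_R(\widehat u_R,s)\bigl((\widehat u_R(s))^k-s\bigr)\,ds$, which by \eqref{eqEstim1}--\eqref{eqEstim2} is increasing on $(-R,\xi_\sigma^\varepsilon)$ and decreasing on $(\xi_\sigma^\varepsilon,R)$, one has $\widehat u_R'(\xi)=(u_+-u_-)\,e^{g_R(\xi)}\big/\!\int_{-R}^{R}e^{g_R(r)}\,dr$, so $|\widehat u_R'|$ is maximal at $\xi_\sigma^\varepsilon$ and it suffices to bound $e^{g_R(\xi_\sigma^\varepsilon)}\big/\!\int_{-R}^{R}e^{g_R}$ uniformly in $R$. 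The only place where the growing constants must be tamed is the quantity $w_R(\xi):=\bigl((\widehat u_R(\xi))^k-\xi\bigr)\widehat v_R(\xi)$, which by \eqref{eq1} satisfies $w_R'=-\widehat v_R$ and $w_R(-R)=v_-(u_-^k+R)$; from its exponential representation $w_R(\xi)=v_-(u_-^k+R)\exp\!\bigl(-\int_{-R}^{\xi}\tfrac{ds}{(\widehat u_R(s))^k-s}\bigr)$ and the trivial inequality $(\widehat u_R(s))^k-s\le u_-^k-s$ one obtains $w_R(\xi)\le v_-(u_-^k-\xi)$ on $[-R,\xi_\sigma^\varepsilon)$, so the prefactor $u_-^k+R$ cancels out. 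This makes $w_R$ bounded on $[\xi_\sigma^\varepsilon-\delta,\xi_\sigma^\varepsilon]$ uniformly in $R$, hence $g_R$ stays within a bounded distance of its maximum on an interval of fixed length $\delta>0$, giving $\int_{-R}^{R}e^{g_R}\ge c\,e^{g_R(\xi_\sigma^\varepsilon)}$ with $c>0$ independent of $R$, and therefore $|\widehat u_R'|\le C$. Finally $\widehat u_R''=\tfrac1\varepsilon w_R\widehat u_R'$: on $[\xi_\sigma^\varepsilon-\delta,R]$ the bound on $w_R$ (and its mirror image coming from \eqref{eqEstim2}) together with $|\widehat u_R'|\le C$ closes the estimate, while on $[-R,\xi_\sigma^\varepsilon-\delta]$ one writes $\widehat u_R''=\widehat u_R'(-R)(e^{g_R})'$ and uses that $\widehat u_R'(-R)=(u_+-u_-)\big/\!\int_{-R}^{R}e^{g_R}$ is exponentially small compared with $e^{g_R(\xi_\sigma^\varepsilon)}$.

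\emph{Item 2.} By Item 1, for each fixed $M$ the families $\{\widehat u_R\}_{R>M}$ and $\{\widehat u_R'\}_{R>M}$ are uniformly bounded and uniformly Lipschitz (hence equicontinuous) on $[-M,M]$, so Arzel\`a--Ascoli yields a subsequence converging in $C^1([-M,M])$; a diagonal extraction over $M=1,2,\dots$ produces $R_k\to\infty$ and a function $\widehat u\in C^1(\mathbb R)$ with $\widehat u_{R_k}\to\widehat u$ and $\widehat u_{R_k}'\to\widehat u'$ uniformly on compact sets. Since $\widehat u_{R_k}'\le 0$ for all $k$, the limit $\widehat u$ is decreasing.

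\emph{Item 3.} The map $\xi\mapsto(\widehat u(\xi))^k-\xi$ is strictly decreasing, so the $C^1_{\mathrm{loc}}$ convergence $\widehat u_{R_k}\to\widehat u$ forces the roots $\xi_\sigma^\varepsilon(R_k)$ to converge to $\xi_\sigma^\varepsilon$, the root of $(\widehat u(\xi))^k=\xi$. Fix $\xi\ne\xi_\sigma^\varepsilon$, say $\xi<\xi_\sigma^\varepsilon$; then for $k$ large $\xi<\xi_\sigma^\varepsilon(R_k)$ and $\widehat v_{R_k}(\widehat u_{R_k},\xi)$ is given by \eqref{solRho1}. The only delicate point is the simultaneous divergence of the prefactor $u_-^k+R_k$ and of $\int_{-R_k}^{\xi}\tfrac{ds}{(\widehat u_{R_k}(s))^k-s}$; writing
\[
v_-(u_-^k+R_k)\exp\!\Bigl(-\int_{-R_k}^{\xi}\frac{ds}{(\widehat u_{R_k}(s))^k-s}\Bigr)
=v_-(u_-^k-\xi)\exp\!\Bigl(\int_{-R_k}^{\xi}\Bigl[\frac{1}{u_-^k-s}-\frac{1}{(\widehat u_{R_k}(s))^k-s}\Bigr]\,ds\Bigr),
\]
and noting that the bracketed integrand is $\le 0$, uniformly bounded, and $O(s^{-2})$ as $s\to-\infty$, dominated convergence together with the uniform convergence of $\widehat u_{R_k}$ on compacts gives $\widehat v_{R_k}(\widehat u_{R_k},\xi)\to\widehat v(\widehat u,\xi)$, the analogous expression with $R=\infty$; the case $\xi>\xi_\sigma^\varepsilon$ is symmetric, using \eqref{solRho2}.

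The main obstacle is Item 1, and specifically the $L^\infty$ bound on $\widehat u_R''$ that is uniform in $R$: the naive bound from $\varepsilon\widehat u_R''=w_R\widehat u_R'$ is useless near $\xi=-R$, where $w_R(-R)=v_-(u_-^k+R)\to\infty$, so one must combine the algebraic cancellation $w_R\le v_-(u_-^k-\xi)$ with the exponentially small size of $\widehat u_R'$ away from $\xi_\sigma^\varepsilon$. Once Item 1 is in place, Items 2 and 3 reduce to routine compactness and a dominated‑convergence bookkeeping of the $R\to\infty$ limit in the explicit formulas.
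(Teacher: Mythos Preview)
Your proof is correct and follows essentially the same route as the paper. The only organisational difference is in Item~1: to bound $|\widehat u_R'|$ the paper fixes an anchor point $\xi_1<u_+^k$ (independent of $R$), bounds $\widehat u_R'(\xi_1)$ directly by the mean value theorem on $[\xi_1,\xi_\sigma^\varepsilon]$ (obtaining $|\widehat u_R'(\xi_1)|\le (u_--u_+)/(u_+^k-\xi_1)$), and then propagates this via $\widehat u_R'(\xi)=\widehat u_R'(\xi_1)\exp\bigl(\tfrac1\varepsilon\int_{\xi_1}^\xi w_R\bigr)$, splitting into the three ranges $\xi<\xi_1$, $\xi_1<\xi<\xi_\sigma^\varepsilon$, $\xi>\xi_\sigma^\varepsilon$; you instead estimate the ratio $e^{g_R(\xi_\sigma^\varepsilon)}/\int_{-R}^{R}e^{g_R}$ directly. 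Both arguments hinge on the same inequality $w_R(\xi)\le v_-(u_-^k-\xi)$, so the difference is cosmetic. For $\widehat u_R''$ the paper simply invokes the ODE $\varepsilon\widehat u_R''=w_R\widehat u_R'$, which indeed glosses over the fact that $w_R(\pm R)$ is not uniformly bounded; your observation that the exponential decay of $\widehat u_R'$ away from $\xi_\sigma^\varepsilon$ (driven by a uniform \emph{lower} bound on $w_R$ there) compensates the linear growth of $w_R$ is the right mechanism, and it must of course be applied symmetrically on $[\xi_\sigma^\varepsilon+\delta,R]$ as well as on $[-R,\xi_\sigma^\varepsilon-\delta]$. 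Items~2 and~3 match the paper's treatment, your Item~3 spelling out the dominated-convergence bookkeeping that the paper leaves implicit.
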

\begin{proof}
 \begin{enumerate}
  \item To simplify the notation in this proof, we shall use $\widehat{u}$, $\widehat{u}'$ and $\widehat{u}''$ instead of $\widehat{u}_R$, $\widehat{u}'_R$ and $\widehat{u}''_R$.\\
  Observe that $u_+^k < \xi_\sigma^\varepsilon<u_-^k$. We choose $\xi_1$ such that $-R<\xi_1<u_+^k$.
  From  \eqref{sysVis2} it follows that
  \begin{equation*}
   \widehat{u}'(\xi)=\widehat{u}'(\xi_1)\exp \left( \int_{\xi_1}^\xi \frac{\widehat{v}(s)((\widehat{u}(s))^k-s)}{\varepsilon}ds \right).
  \end{equation*}
As $\widehat{u}''(\xi)<0$ for $\xi \in (-R,\xi_\sigma^\varepsilon)$, then we have that $\widehat{u}'(\xi) <\widehat{u}'(\xi_1)<0$, $\xi \in (\xi_1,\xi_\sigma^\varepsilon)$.
Since
$$ u_--u_+ >\widehat{u}(\xi_1)-\widehat{u}(\xi_\sigma^\varepsilon)=\widehat{u}'(\zeta)(\xi_1-\xi_\sigma^\varepsilon)>\widehat{u}'(\zeta)(\xi_1-u_+^k), $$
where $\zeta\in (\xi_1,\xi_\sigma^\varepsilon)$, we get
$$ \widehat{u}'(\zeta)>\frac{u_--u_+}{\xi_1-u_+^k}, \qquad \zeta\in (\xi_1,\xi_\sigma^\varepsilon).$$
It follows that
$$ 0>\widehat{u}'(\xi_1)>\frac{u_--u_+}{\xi_1-u_+^k}. $$
When $\xi<\xi_1$, $$ \exp \left( \int_{\xi_1}^\xi \frac{\widehat{v}(s)((\widehat{u}(s))^k-s)}{\varepsilon}ds \right)< 1. $$
When $\xi_1<\xi<\xi_\sigma^\varepsilon$, observe that
\begin{align*}
 \widehat{v}_1(\xi_1)&=\widehat{v}_-\frac{u_-^k+R}{(\widehat{u}(\xi_1))^k-\xi_1}\exp \left(-\int_{-R}^{\xi_1} \frac{ds}{(\widehat{u}(s))^k-s} \right)\\
 &\le \widehat{v}_-\frac{u_-^k+R}{(\widehat{u}(\xi_1))^k-\xi_1}\exp \left(-\int_{-R}^{\xi_1} \frac{ds}{u_-^k-s} \right)
 =\widehat{v}_-\frac{u_-^k-\xi_1}{(\widehat{u}(\xi_1))^k-\xi_1}
\end{align*}
and
\begin{align*}
 \widehat{v}(\xi)((\widehat{u}(\xi))^k -\xi)&=\widehat{v}(\xi_1)((\widehat{u}(\xi_1))^k -\xi_1)-\int_{\xi_1}^\xi \widehat{v}(s) ds\\
 &\le \widehat{v}(\xi_1)((\widehat{u}(\xi_1))^k -\xi_1) \le v_-(u_-^k-\xi_1),
\end{align*}
and we obtain
$$ \exp \left( \int_{\xi_1}^\xi \frac{\widehat{v}(s)((\widehat{u}(s))^k-s)}{\varepsilon}ds \right) \le \exp \left( \frac{(v_-(u_-^k-\xi_1)^2}{\varepsilon} \right). $$ 
When $\xi>\xi_\sigma^\varepsilon$, we have
\begin{small}
\begin{align*}
 \int_{\xi_1}^\xi \frac{\widehat{v}(s)((\widehat{u}(s))^k-s)}{\varepsilon}ds &= \int_{\xi_1}^{\xi_\sigma^\varepsilon} \frac{\widehat{v}(s)((\widehat{u}(s))^k-s)}{\varepsilon}ds+\int_{\xi_\sigma^\varepsilon}^\xi \frac{\widehat{v}(s)((\widehat{u}(s))^k-s)}{\varepsilon}ds\\
 &<\int_{\xi_1}^{\xi_\sigma^\varepsilon} \frac{\widehat{v}(s)((\widehat{u}(s))^k-s)}{\varepsilon}ds
\end{align*}
\end{small}
or 
$$ \exp \left( \int_{\xi_1}^\xi \frac{\widehat{v}(s)((\widehat{u}(s))^k-s)}{\varepsilon}ds \right) < \exp \left( \int_{\xi_1}^{\xi_\sigma^\varepsilon} \frac{\widehat{v}(s)((\widehat{u}(s))^k-s)}{\varepsilon}ds \right). $$
Therefore, $\widehat{u}'(\xi)$ and $\widehat{u}(\xi)$ are uniformly bounded.
From \eqref{PRegA} it follows that $\widehat{u}''_R(\xi)$ is also uniformly bounded, with respect to $R$ and $\xi\in [-R,R]$.
\item Fixing $M>0$, we consider $R>>M$ and apply the Arzel{\'a}-Ascoli theorem to obtain a sequence $(\widehat{u}_{R_k})$ converging  in $C^1([-M,M])$ to a decreasing function $u$. Then, by a diagonalization  process, we obtain a sequence $(\widehat{u}_{R_k})$, which we do not relabel, such that $(\widehat{u}_{R_k})$ converges to a decreasing function $\widehat{u}\in C^1(\mathbb{R})$, uniformly in compact sets in $\mathbb{R}$,  $(\widehat{u}'_{R_k})$ also converges uniformly in compact sets in $\mathbb{R}$ to $\widehat{u}'$, and $\widehat{u}(-\infty)=u_{-}$, $\widehat{u}(\infty)=u_+$.
\item Claim 3 is obtained from \eqref{solRho1}, \eqref{solRho2} by passing to the limit as $R_k\to\infty$, for each fixed $\xi\not=\xi_\sigma^\varepsilon$,  noticing that, up to a subsequence, we can assume that $\xi_\sigma^{R_k}$, defined by $(u_{R_k}^k(\xi_\sigma^{R_k}))=\xi_\sigma^{R_k}$, converges to $\xi_\sigma^\varepsilon$ (where $\xi_\sigma^\varepsilon$ is defined by $(u(\xi_\sigma^\varepsilon))^k=\xi_\sigma^\varepsilon$).
 \end{enumerate}
\end{proof}

\begin{theorem} \label{Thm3.7}
 Let $\widehat{u}$ be the function obtained in Lemma \ref{lemma3.6}. Then, for each $\varepsilon>0$, $\widehat{u}$ satisfies
 \begin{equation*}
  \begin{cases}
   \varepsilon \widehat{u}''=(\widehat{v}(\widehat{u},\xi)(\widehat{u}^k-\xi))\widehat{u}',\\
   \widehat{u}(\pm \infty)=u_\pm,
  \end{cases}
 \end{equation*}
and
\begin{equation*}
 \widehat{v}(\xi)=\begin{cases}
            \widehat{v}_1(\xi), &\mbox{if }-\infty <\xi<\xi_\sigma^\varepsilon,\\
            \widehat{v}_2(\xi), &\mbox{if }\xi_\sigma^\varepsilon <\xi<\infty,\\
           \end{cases}
\end{equation*}
where $\xi_\sigma^\varepsilon$ satisfies $(u(\xi_\sigma^\varepsilon))^k = \xi_\sigma^\varepsilon$,
\begin{align*}
 \widehat{v}_1(\xi)=v_-\exp \left( -\int_{-\infty}^\xi \frac{((u(s))^k)'}{(u(s))^k-s} ds \right)
 \end{align*}
 and 
 \begin{align*}
  \widehat{v}_2(\xi)=v_+\exp \left( \int_{\xi}^{+\infty} \frac{((u(s))^k)'}{(u(s))^k-s} ds \right).
\end{align*}
\end{theorem}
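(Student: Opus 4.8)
The plan is to let $R_k\to\infty$ in the relations satisfied by the approximants $(\widehat v_{R_k},\widehat u_{R_k})$ of Theorem~\ref{Thm3.5} and to feed in the compactness and pointwise convergence already recorded in Lemma~\ref{lemma3.6}. Each $\widehat u_{R_k}$ is a fixed point of $T$, hence solves \eqref{PRegA} with $U=\widehat u_{R_k}$:
\[
 \varepsilon\,\widehat u_{R_k}''(\xi)=\widehat v_{R_k}(\widehat u_{R_k},\xi)\bigl((\widehat u_{R_k}(\xi))^k-\xi\bigr)\widehat u_{R_k}'(\xi),\qquad \xi\in[-R_k,R_k],\quad \widehat u_{R_k}(\pm R_k)=u_\pm .
\]
The limiting boundary condition $\widehat u(\pm\infty)=u_\pm$ is already part of Lemma~\ref{lemma3.6}(2), so what remains is to derive the differential equation for $\widehat u$ and to bring $\widehat v$ into the claimed explicit form.

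For the equation, fix $M>0$, take $R_k>M$, and integrate the identity above between $\xi_0,\xi\in[-M,M]$:
\[
 \varepsilon\bigl(\widehat u_{R_k}'(\xi)-\widehat u_{R_k}'(\xi_0)\bigr)=\int_{\xi_0}^{\xi}\widehat v_{R_k}(\widehat u_{R_k},s)\bigl((\widehat u_{R_k}(s))^k-s\bigr)\widehat u_{R_k}'(s)\,ds .
\]
The integrand equals $\varepsilon\,\widehat u_{R_k}''(s)$, hence is bounded uniformly in $k$ and $s\in[-M,M]$ by Lemma~\ref{lemma3.6}(1), while by Lemma~\ref{lemma3.6}(2)--(3) it converges, for every $s\neq\xi_\sigma^\varepsilon$, to $h(s)\widehat u'(s)$ with $h(s):=\widehat v(\widehat u,s)\bigl((\widehat u(s))^k-s\bigr)$. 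Since $\xi_\sigma^\varepsilon$ is Lebesgue null, dominated convergence gives $\varepsilon(\widehat u'(\xi)-\widehat u'(\xi_0))=\int_{\xi_0}^{\xi}h(s)\widehat u'(s)\,ds$ for all $\xi_0,\xi\in\mathbb R$. Away from $\xi_\sigma^\varepsilon$, $h$ is continuous with $h'=-\widehat v(\widehat u,\cdot)<0$ (the rewriting of \eqref{eq1} used in the continuity lemma), and the argument behind \eqref{eqEst1}, adapted to $R=\infty$ — using that $s\mapsto(\widehat u(s))^k-s$ has derivative $\le-1$, so $\xi_\sigma^\varepsilon$ is a simple zero and $1/((\widehat u(s))^k-s)$ is not integrable there — shows $h(\xi)\to0$ as $\xi\to\xi_\sigma^\varepsilon\pm$. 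Hence $h\in C(\mathbb R)$ with $h(\xi_\sigma^\varepsilon)=0$, so $h\widehat u'\in C(\mathbb R)$, and the integral identity forces $\widehat u'\in C^1(\mathbb R)$ with $\varepsilon\widehat u''=h\widehat u'$, which is the asserted equation. (Since, by the $R=\infty$ analogue of the first lemma of this section, with $L^1_{\mathrm{loc}}$ replacing $L^1$, $\widehat v$ is a weak solution of the first equation of \eqref{sysVis2}, the pair $(\widehat v,\widehat u)$ then solves \eqref{sysVis2} weakly.)

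For $\widehat v$, note that for finite $R$, $\tfrac{d}{ds}\ln\bigl((U(s))^k-s\bigr)=\tfrac{((U(s))^k)'-1}{(U(s))^k-s}$, so integration from $-R$ to $\xi$ gives $\tfrac{u_-^k+R}{(U(\xi))^k-\xi}=\exp\!\bigl(-\int_{-R}^{\xi}\tfrac{((U(s))^k)'-1}{(U(s))^k-s}\,ds\bigr)$; substituting into \eqref{solRho1} collapses it to $\widehat v_1(\xi)=v_-\exp\!\bigl(-\int_{-R}^{\xi}\tfrac{((U(s))^k)'}{(U(s))^k-s}\,ds\bigr)$, and likewise \eqref{solRho2} to $\widehat v_2(\xi)=v_+\exp\!\bigl(\int_{\xi}^{R}\tfrac{((U(s))^k)'}{(U(s))^k-s}\,ds\bigr)$. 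Taking $U=\widehat u_{R_k}$ and letting $R_k\to\infty$: for $\xi<\xi_\sigma^\varepsilon$ the $C^1$-convergence of Lemma~\ref{lemma3.6}(2) controls the integral over $[-M,\xi]$, and the tail $\int_{-R_k}^{-M}$ is $O(1/M)$ uniformly in $k$ (because $|((\widehat u_{R_k})^k)'|\le C|\widehat u_{R_k}'|$, $\int|\widehat u_{R_k}'|\le u_--u_+$, and $(\widehat u_{R_k}(s))^k-s\ge M/2$ there once $M$ is large), so $\widehat v_1(\xi)=v_-\exp\!\bigl(-\int_{-\infty}^{\xi}\tfrac{((\widehat u(s))^k)'}{(\widehat u(s))^k-s}\,ds\bigr)$, and symmetrically for $\widehat v_2$. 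By uniqueness of limits these coincide with the functions $\widehat v(\widehat u,\cdot)$ of Lemma~\ref{lemma3.6}(3).

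The only genuinely delicate point I foresee is the behaviour at $\xi_\sigma^\varepsilon$: the continuity $h(\xi_\sigma^\varepsilon)=0$ (needed to upgrade $\widehat u$ from $C^1$ to a classical $C^2$ solution) and the convergence $\xi_\sigma^{R_k}\to\xi_\sigma^\varepsilon$ of the splitting abscissae (needed for a.e. convergence of the integrands, already built into Lemma~\ref{lemma3.6}(3)) both reduce to $\xi_\sigma^\varepsilon$ being a simple zero of $s\mapsto(\widehat u(s))^k-s$, which holds since $\widehat u$ is decreasing and $k$ is odd. The passage to the limit under the integral sign, by contrast, should be immediate once one recognizes the integrand as $\varepsilon\widehat u_{R_k}''$ and uses the uniform bound of Lemma~\ref{lemma3.6}(1).
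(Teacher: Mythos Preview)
Your proof is correct and follows the same integrate--pass-to-the-limit--differentiate strategy as the paper. The only notable difference is that the paper first integrates by parts (using $(\widehat v(\widehat u^k-\xi))'=-\widehat v$) so that the integrand becomes $\widehat v_R\widehat u_R$ before invoking dominated convergence, whereas you keep the integrand as $\varepsilon\widehat u_{R_k}''$ and use the uniform $C^2$ bound of Lemma~\ref{lemma3.6}(1) directly; your version makes the domination more transparent, and your explicit rewriting of \eqref{solRho1}--\eqref{solRho2} into the exponential form and your check that $h(\xi_\sigma^\varepsilon)=0$ fill in details the paper leaves implicit.
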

\begin{proof}
Denote by $(\widehat{v}_R(\xi),\widehat{u}_R(\xi))$ the solution of the problem \eqref{sysVis2} with boundary value $(\widehat{v}(\pm R),\widehat{u}(\pm R)) = (v_\pm,u_\pm)$. Fixing $\xi_2$ and integrating \eqref{PRegA} from $\xi_2$ to $\xi$, we obtain
\begin{align*}
 \varepsilon (\widehat{u}'_R(\xi)-\widehat{u}'_R(\xi_2)) =& (\widehat{v}_R(\xi)((\widehat{u}_R(\xi))^k-\xi))\widehat{u}_R(\xi) \\
 &-(\widehat{v}_R(\xi_2)((\widehat{u}_R(\xi_2))^k-\xi_2))\widehat{u}_R(\xi_2)+\int_{\xi_2}^\xi \widehat{v}_R(s)\widehat{u}_R(s) ds
\end{align*}
(independently of whether $\xi_\sigma^\varepsilon$ is between $\xi_2$ and $\xi$).
Letting $R \to +\infty$, by the Lebesgue Convergence Theorem it follows that
\begin{align}
 \varepsilon (\widehat{u}'(\xi)-\widehat{u}'(\xi_2))=& (\widehat{v}(\xi)((\widehat{u}(\xi))^k-\xi))\widehat{u}(\xi) \nonumber \\
 &-(\widehat{v}(\xi_2)((\widehat{u}(\xi_2))^k-\xi_2))\widehat{u}(\xi_2) 
  +\int_{\xi_2}^\xi \widehat{v}(s)\widehat{u}(s) ds. \label{eqDiff}
\end{align}
Differentiating \eqref{eqDiff} with respect to $\xi$, 
we obtain
$$ \varepsilon \widehat{u}''=(\widehat{v}(\widehat{u}^k-\xi))\widehat{u}', $$ and from \eqref{Sol1u} we have $\widehat{u}(\pm \infty)=u_\pm$.
\end{proof}

\begin{theorem}
 There exists a weak solution $(\widehat{v},\widehat{u}) \in L^1_{loc}((-\infty,+\infty)) \times C^2((-\infty,+\infty))$ for the boundary value problem \eqref{sysVis2}--\eqref{datsysVisc2}.
\end{theorem}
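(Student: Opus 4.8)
The statement is essentially a summary of what has been established in this section, so the plan is to assemble the pieces and then check the definition of weak solution. First I would take $(\widehat v,\widehat u)$ to be the limit, along the sequence $R_k\to\infty$ supplied by Lemma~\ref{lemma3.6}, of the bounded-domain solutions $(\widehat v_{R_k},\widehat u_{R_k})$ of Theorem~\ref{Thm3.5}. By Lemma~\ref{lemma3.6}, $\widehat u\in C^1(\mathbb R)$ is decreasing, $\widehat u_{R_k}\to\widehat u$ in $C^1_{loc}(\mathbb R)$, $\widehat v_{R_k}\to\widehat v$ pointwise on $\mathbb R\setminus\{\xi_\sigma^\varepsilon\}$ with $\widehat v=\widehat v_1$ on $(-\infty,\xi_\sigma^\varepsilon)$ and $\widehat v=\widehat v_2$ on $(\xi_\sigma^\varepsilon,+\infty)$ as in Theorem~\ref{Thm3.7}, and $(\widehat u(\xi_\sigma^\varepsilon))^k=\xi_\sigma^\varepsilon$. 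Theorem~\ref{Thm3.7} already gives $\varepsilon\widehat u''=\widehat v(\widehat u^k-\xi)\widehat u'$ and $\widehat u(\pm\infty)=u_\pm$, while $\widehat v(\pm\infty)=v_\pm$ is read off the explicit formulas, so the boundary condition \eqref{datsysVisc2} holds; it remains to show $\widehat v\in L^1_{loc}(\mathbb R)$, that $\widehat u\in C^2(\mathbb R)$, and that the two equations of \eqref{sysVis2} hold weakly.

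The thing I would nail down next is the behavior of $\widehat v$ near $\xi_\sigma^\varepsilon$. Subtracting identity \eqref{eqEstim1} evaluated at $\xi$ from the same identity at $-M$ yields, for $-M<\xi<\min\{\xi_\sigma^{R_k},M\}$, the bound $\int_{-M}^\xi\widehat v_{R_k}<\widehat v_{R_k}(-M)\big((\widehat u_{R_k}(-M))^k+M\big)$, whose right-hand side converges to the finite number $C_M:=\widehat v_1(-M)\big((\widehat u(-M))^k+M\big)$ by Lemma~\ref{lemma3.6}. Fatou's lemma then gives $\int_{-M}^{\xi_\sigma^\varepsilon}\widehat v_1\le C_M$, and symmetrically $\int_{\xi_\sigma^\varepsilon}^{M}\widehat v_2<\infty$; since $\widehat v$ is continuous off $\xi_\sigma^\varepsilon$, this proves $\widehat v\in L^1_{loc}(\mathbb R)$. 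Passing to the limit in the same computation gives $(\widehat u(\xi)^k-\xi)\widehat v_1(\xi)=C_M-\int_{-M}^\xi\widehat v_1\ge 0$, which has a finite limit $L\ge 0$ as $\xi\to\xi_\sigma^\varepsilon-$; if $L>0$ then $\widehat v_1(\xi)$ would grow like $L/(\widehat u(\xi)^k-\xi)$ and fail to be integrable at $\xi_\sigma^\varepsilon$, contradicting the bound just obtained, so $L=0$, and likewise from the right. Hence $(\widehat u(\xi)^k-\xi)\widehat v(\xi)\to 0$ as $\xi\to\xi_\sigma^\varepsilon\pm$, so $\xi\mapsto\widehat v(\xi)(\widehat u(\xi)^k-\xi)$ extends continuously across $\xi_\sigma^\varepsilon$ (with value $0$ there); combined with the ODE of Theorem~\ref{Thm3.7} this also upgrades $\widehat u$ to $C^2(\mathbb R)$.

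With these facts in hand the weak formulations follow. For the first equation I would rerun, essentially verbatim, the verification carried out in the first lemma of this section, with $[-R,R]$ replaced by $\mathbb R$: for $\phi\in C_0^\infty(\mathbb R)$ split the integral at $\xi_1<\xi_\sigma^\varepsilon<\xi_2$, integrate by parts on $(-\infty,\xi_1]$ and $[\xi_2,+\infty)$ (legitimate since $\widehat v\in C^1$ there and $\phi$ is compactly supported), let $\xi_1\uparrow\xi_\sigma^\varepsilon$, $\xi_2\downarrow\xi_\sigma^\varepsilon$, and discard the leftover terms using $(\widehat u^k-\xi)\widehat v\to 0$ at $\xi_\sigma^\varepsilon\pm$ and $\widehat v\in L^1_{loc}$; the outcome is $\int_{\mathbb R}\widehat v(\widehat u^k-\xi)\phi'\,d\xi=\int_{\mathbb R}\widehat v\,\phi\,d\xi$, i.e.\ the weak form of $-\xi\widehat v_\xi+(\widehat v\widehat u^k)_\xi=0$. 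For the second equation I would set $w(\xi):=\varepsilon\widehat u'(\xi)-\widehat v(\xi)(\widehat u(\xi)^k-\xi)\widehat u(\xi)$; by \eqref{eqDiff}, $w(\xi)-w(\xi_2)=\int_{\xi_2}^\xi\widehat v\widehat u\,ds$, so $w$ is continuous on $\mathbb R$ (using the continuity of $\widehat v(\widehat u^k-\xi)$ just established), absolutely continuous on compacts, and $w'=\widehat v\widehat u$ a.e.; testing against $\phi\in C_0^\infty(\mathbb R)$, $\int_{\mathbb R}\widehat v\widehat u\,\phi\,d\xi=-\int_{\mathbb R}w\,\phi'\,d\xi=\int_{\mathbb R}\big(\widehat v\widehat u(\widehat u^k-\xi)-\varepsilon\widehat u'\big)\phi'\,d\xi$, which rearranges to the weak form of $-\xi(\widehat v\widehat u)_\xi+(\widehat v\widehat u^{k+1})_\xi=\varepsilon\widehat u_{\xi\xi}$. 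This exhibits $(\widehat v,\widehat u)$ as the desired weak solution.

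I expect the one genuine obstacle to be the control of $\widehat v$ at $\xi_\sigma^\varepsilon$ in the limit $R\to\infty$: the bound $A_1=(u_-^k+R)v_-$ that did the job on $[-R,R]$ diverges, so both the local integrability of $\widehat v$ near $\xi_\sigma^\varepsilon$ and the vanishing of the ``flux'' $(\widehat u^k-\xi)\widehat v$ there have to be recovered from the uniform estimate plus Fatou, as above, rather than by naively passing to the limit in the finite-$R$ identities; once that is in place, everything else is a routine transcription of the bounded-domain arguments.
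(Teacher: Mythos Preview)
Your plan is correct and follows the same overall route as the paper: take $(\widehat v,\widehat u)$ from Theorem~\ref{Thm3.7}/Lemma~\ref{lemma3.6}, read off the regularity of $\widehat u$ from \eqref{eqDiff}, and then verify that the two equations of \eqref{sysVis2} hold. The paper's own proof is very brief: it simply notes that $\widehat v$ is $C^1$ off $\xi_\sigma^\varepsilon$, asserts that $\widehat v$ is \emph{bounded} (hence locally integrable), gets $\widehat u\in C^2$ from \eqref{eqDiff}, and says the first equation follows by differentiating the explicit formulas for $\widehat v_1,\widehat v_2$ while the second is equivalent to the first together with the ODE of Theorem~\ref{Thm3.7}.

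The one substantive difference is exactly the point you flagged as the ``genuine obstacle.'' The paper's assertion that $\widehat v$ is bounded is not justified and is in general false: from the formula $\widehat v_1(\xi)=v_-\exp\big(-\int_{-\infty}^\xi ((\widehat u^k)'/(\widehat u^k-s))\,ds\big)$, if $(\widehat u^k)'(\xi_\sigma^\varepsilon)\neq 0$ then $\widehat v_1$ blows up like $(\xi_\sigma^\varepsilon-\xi)^{-\beta}$ with $\beta=|(\widehat u^k)'(\xi_\sigma^\varepsilon)|/(1+|(\widehat u^k)'(\xi_\sigma^\varepsilon)|)\in(0,1)$, so $\widehat v\in L^1_{loc}$ but is not bounded. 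Your Fatou argument, passing the uniform bound $\int_{-M}^\xi \widehat v_{R_k}\le \widehat v_{R_k}(-M)\big((\widehat u_{R_k}(-M))^k+M\big)$ to the limit, is the right way to secure $L^1_{loc}$, and your subsequent deduction that $(\widehat u^k-\xi)\widehat v\to 0$ at $\xi_\sigma^\varepsilon\pm$ (needed for the weak formulation across the singular point and for the continuity of the right-hand side of the ODE for $\widehat u$) is likewise the honest version of what the paper's proof sweeps under the rug. So your argument is not a different route so much as a correct filling-in of the paper's sketch.
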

\begin{proof}
Let $(\widehat{v}, \widehat{u})$ be defined in Theorem \ref{Thm3.7}. By Lemma \ref{lemma3.6} we know that $\widehat{u}$ is decreasing and 
belongs to $C^1(\mathbb{R})$. Then $\widehat{v}$ is of class $C^1$ in $(-\infty, \xi_\sigma^\varepsilon)\cup (\xi_\sigma^\varepsilon,\infty)$. In addition, it is also bounded, hence, locally integrable. From \eqref{eqDiff} it follows that $\widehat{u}$ is of class $C^2(\mathbb{R})$. The first equation in \eqref{sysVis2} is obtained by differentiating $\widehat{v}_1$ and $\widehat{v}_2$, and the second one is equivalent to the first one and the equation stated in Theorem \ref{Thm3.7}. 
\end{proof}

\section{The limit solutions of \texorpdfstring{\eqref{sysVis1}--\eqref{datsysVisc1}}{(5)--(6)} as viscosity vanishes}
We continue this section studying the case when $u_->u_+$ and we are interested in 
the behavior of the solutions $(\widehat{v}^\varepsilon,\widehat{u}^\varepsilon)$ of \eqref{sysVis2}--\eqref{datsysVisc2} as $\varepsilon \to 0+$.
\begin{lemma} \label{lemmaA}
 Let $\xi_\sigma^\varepsilon$ be the unique point satisfying $(\widehat{u}^\varepsilon(\xi_\sigma^\varepsilon))^k=\xi_\sigma^\varepsilon$, and let $\xi_\sigma$ be the limit $$ \xi_\sigma=\lim_{\varepsilon \to 0+} \xi_\sigma^\varepsilon $$
 (passing to a subsequence if necessary). Then for any $\eta >0$,
 \begin{align*}
  &\lim_{\varepsilon \to 0+} \widehat{u}_\xi^\varepsilon(\xi)=0, \qquad \mbox{for } |\xi-\xi_\sigma| \ge \eta,\\
  &\lim_{\varepsilon \to 0+} \widehat{u}^\varepsilon(\xi)=\begin{cases}
                                                 u_-, &\mbox{if } \xi \le \xi_\sigma -\eta,\\
                                                 u_+, &\mbox{if } \xi \ge \xi_\sigma +\eta,\\
                                                \end{cases}
 \end{align*}
uniformly in the above intervals.
\end{lemma}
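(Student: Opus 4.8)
The plan is to reduce everything to the first–order ODE satisfied by $\widehat{u}^\varepsilon$. By Theorem \ref{Thm3.7}, $\widehat{u}^\varepsilon$ solves $\varepsilon\,(\widehat{u}^\varepsilon)''=a^\varepsilon(\xi)\,(\widehat{u}^\varepsilon)'$ with $a^\varepsilon(\xi):=\widehat{v}^\varepsilon(\xi)\big((\widehat{u}^\varepsilon(\xi))^k-\xi\big)$, it is strictly decreasing (the function $w:=(\widehat{u}^\varepsilon)'$ solves the linear ODE $w'=(a^\varepsilon/\varepsilon)w$, so it cannot vanish without being identically zero, which would contradict $u_-\neq u_+$), and $\widehat{u}^\varepsilon(\pm\infty)=u_\pm$; hence $(\widehat{u}^\varepsilon)'<0$ and $\int_{\re}|(\widehat{u}^\varepsilon)'|\,d\xi=u_--u_+$. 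One integration of the ODE gives, for any $\xi_0$,
\[
(\widehat{u}^\varepsilon)'(\xi)=(\widehat{u}^\varepsilon)'(\xi_0)\,\exp\!\Big(\tfrac1\varepsilon\int_{\xi_0}^{\xi}a^\varepsilon(s)\,ds\Big),
\]
so the whole statement follows once we control the sign and size of $a^\varepsilon$. Since $k$ is odd and $\widehat{u}^\varepsilon$ is decreasing, $(\widehat{u}^\varepsilon)^k-\xi$ is strictly decreasing and vanishes only at $\xi_\sigma^\varepsilon$; moreover the first equation of \eqref{sysVis2} is equivalent to $\big(((\widehat{u}^\varepsilon)^k-\xi)\widehat{v}^\varepsilon\big)'=-\widehat{v}^\varepsilon<0$, so $a^\varepsilon$ extends continuously across $\xi_\sigma^\varepsilon$, is strictly decreasing, positive on $(-\infty,\xi_\sigma^\varepsilon)$, negative on $(\xi_\sigma^\varepsilon,\infty)$, and vanishes at $\xi_\sigma^\varepsilon$. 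Integrating $(a^\varepsilon)'=-\widehat{v}^\varepsilon$ from $\xi$ to $\xi_\sigma^\varepsilon$ (resp. from $\xi_\sigma^\varepsilon$ to $\xi$) then gives $a^\varepsilon(\xi)=\int_\xi^{\xi_\sigma^\varepsilon}\widehat{v}^\varepsilon(s)\,ds$ for $\xi<\xi_\sigma^\varepsilon$ and $|a^\varepsilon(\xi)|=\int_{\xi_\sigma^\varepsilon}^{\xi}\widehat{v}^\varepsilon(s)\,ds$ for $\xi>\xi_\sigma^\varepsilon$.

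The crucial quantitative input is an $\varepsilon$–independent lower bound for $|a^\varepsilon|$ at positive distance from $\xi_\sigma$, and for this I would use the explicit density formulas of Theorem \ref{Thm3.7}. Because $((\widehat{u}^\varepsilon)^k)'\le 0$ and $(\widehat{u}^\varepsilon)^k-s>0$ for $s<\xi_\sigma^\varepsilon$, the exponent in $\widehat{v}_1^\varepsilon$ is $\ge 0$, hence $\widehat{v}_1^\varepsilon\ge v_-$ on $(-\infty,\xi_\sigma^\varepsilon)$; since $(\widehat{u}^\varepsilon)^k-s<0$ for $s>\xi_\sigma^\varepsilon$, the exponent in $\widehat{v}_2^\varepsilon$ is also $\ge 0$, hence $\widehat{v}_2^\varepsilon\ge v_+$ on $(\xi_\sigma^\varepsilon,\infty)$. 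Substituting these into the formulas just obtained yields $a^\varepsilon(\xi)\ge v_-(\xi_\sigma^\varepsilon-\xi)$ for $\xi<\xi_\sigma^\varepsilon$ and $|a^\varepsilon(\xi)|\ge v_+(\xi-\xi_\sigma^\varepsilon)$ for $\xi>\xi_\sigma^\varepsilon$. Using $\xi_\sigma^\varepsilon\to\xi_\sigma$ together with the monotonicity of $a^\varepsilon$, we obtain $\varepsilon_0>0$ and a constant $c_\eta>0$, independent of $\varepsilon$ (one may take $c_\eta=\tfrac14\min\{v_-,v_+\}\,\eta$), such that for all $0<\varepsilon<\varepsilon_0$ one has $a^\varepsilon(s)\ge c_\eta$ for every $s\le\xi_\sigma-\tfrac{\eta}{2}$ and $|a^\varepsilon(s)|\ge c_\eta$ for every $s\ge\xi_\sigma+\tfrac{\eta}{2}$.

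It remains to pin down $|(\widehat{u}^\varepsilon)'|$ at a fixed reference point just outside $\xi_\sigma$. From $\varepsilon(\widehat{u}^\varepsilon)''=a^\varepsilon(\widehat{u}^\varepsilon)'$ and the sign of $a^\varepsilon$, the function $|(\widehat{u}^\varepsilon)'|$ is nondecreasing on $(-\infty,\xi_\sigma^\varepsilon)$ and nonincreasing on $(\xi_\sigma^\varepsilon,\infty)$; comparing this monotonicity on the short interval $[\xi_\sigma-\tfrac{\eta}{2},\xi_\sigma-\tfrac{\eta}{4}]$, which lies to the left of $\xi_\sigma^\varepsilon$ for $\varepsilon$ small, with $\int_{\re}|(\widehat{u}^\varepsilon)'|=u_--u_+$ gives $|(\widehat{u}^\varepsilon)'(\xi_\sigma-\tfrac{\eta}{2})|\le\tfrac{4(u_--u_+)}{\eta}$, and similarly $|(\widehat{u}^\varepsilon)'(\xi_\sigma+\tfrac{\eta}{2})|\le\tfrac{4(u_--u_+)}{\eta}$. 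Then, for $\xi\le\xi_\sigma-\eta$, taking $\xi_0=\xi_\sigma-\tfrac{\eta}{2}$ above and bounding $\int_\xi^{\xi_\sigma-\eta/2}a^\varepsilon\ge c_\eta(\xi_\sigma-\tfrac{\eta}{2}-\xi)\ge c_\eta\tfrac{\eta}{2}$, we get
\[
|(\widehat{u}^\varepsilon)'(\xi)|\le\tfrac{4(u_--u_+)}{\eta}\exp\!\Big(-\tfrac{c_\eta}{\varepsilon}\big(\xi_\sigma-\tfrac{\eta}{2}-\xi\big)\Big)\le\tfrac{4(u_--u_+)}{\eta}\,e^{-c_\eta\eta/(2\varepsilon)},
\]
which tends to $0$ uniformly on $(-\infty,\xi_\sigma-\eta]$; the case $\xi\ge\xi_\sigma+\eta$ is symmetric, using $a^\varepsilon<0$ there. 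Integrating the first (sharper) bound finally yields
\[
0\le u_--\widehat{u}^\varepsilon(\xi)=\int_{-\infty}^{\xi}|(\widehat{u}^\varepsilon)'(s)|\,ds\le\tfrac{4(u_--u_+)}{\eta}\cdot\tfrac{\varepsilon}{c_\eta}\,e^{-c_\eta\eta/(2\varepsilon)}\longrightarrow 0\quad(\varepsilon\to0+),
\]
uniformly for $\xi\le\xi_\sigma-\eta$, and likewise $\widehat{u}^\varepsilon(\xi)-u_+\to 0$ uniformly for $\xi\ge\xi_\sigma+\eta$. The main obstacle is precisely this first quantitative step: the estimates must survive the division by $\varepsilon$ inside the exponential, and the key realization is that the explicit density formulas force $\widehat{v}^\varepsilon\ge\min\{v_-,v_+\}>0$ off $\xi_\sigma^\varepsilon$, which keeps $|a^\varepsilon|$ bounded below by a positive constant on every region separated from $\xi_\sigma$; everything else is routine.
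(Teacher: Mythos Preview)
Your argument is correct and follows the same overarching strategy as the paper: write $(\widehat u^\varepsilon)'(\xi)=(\widehat u^\varepsilon)'(\xi_0)\exp\!\big(\tfrac1\varepsilon\int_{\xi_0}^\xi a^\varepsilon\big)$, bound $|(\widehat u^\varepsilon)'(\xi_0)|$ at a reference point, and produce an $\varepsilon$-independent lower bound on $|a^\varepsilon|$ away from $\xi_\sigma$. Where you depart from the paper is in the execution of both ingredients, and in each case your route is cleaner. For the reference-point bound, the paper integrates the ODE twice on $[\xi_3,\xi]$, compares with a Gaussian-type integral, and passes $\xi\to\infty$ to obtain $|\widehat u'(\xi_3)|\le (u_--u_+)/(\sqrt{\varepsilon}\,A_3)$, an $\varepsilon$-dependent bound that is later defeated by the $e^{-c/\varepsilon}$ factor; your pigeonhole argument (monotonicity of $|(\widehat u^\varepsilon)'|$ on a short interval together with $\int_{\re}|(\widehat u^\varepsilon)'|=u_--u_+$) gives the $\varepsilon$-free bound $4(u_--u_+)/\eta$ directly. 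For the lower bound on $|a^\varepsilon|$, the paper derives $\widehat v(\xi)\ge v_+\frac{\xi-(\widehat u(\xi_3))^k}{\xi-(\widehat u(\xi))^k}$ via an explicit limit computation, whereas you observe immediately from the sign of the exponent in the formulas of Theorem~\ref{Thm3.7} that $\widehat v_1^\varepsilon\ge v_-$ and $\widehat v_2^\varepsilon\ge v_+$, yielding $|a^\varepsilon(\xi)|\ge \min\{v_-,v_+\}\,|\xi-\xi_\sigma^\varepsilon|$. Both simplifications are legitimate; the paper's argument is perhaps closer in spirit to the original works of Tan--Zhang--Zheng and Yang, while yours exploits more directly the specific structure (namely $(a^\varepsilon)'=-\widehat v^\varepsilon$ and the sign pattern of the density formulas).
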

\begin{proof} To simplify the notation in this proof, we use $\widehat{v}$, $\widehat{u}$ instead of $\widehat{v}^\varepsilon$, $\widehat{u}^\varepsilon$.

 Take $\xi_3=\xi_\sigma+\eta/2$, and let $\varepsilon$ be so small such that $\xi_\sigma^\varepsilon <\xi_3-\eta/4$. 
 For $\xi>\xi_\sigma$,
 \begin{align*}
  \widehat{v}(\xi)&=v_+ \exp \left( \int_\xi^{+\infty} \frac{((\widehat{u}(s))^k)'}{(\widehat{u}(s))^k-s} ds \right) \\
  &=\lim_{R\to +\infty} v_+ \frac{R-u_+^k}{\xi-(\widehat{u}(\xi))^k}\exp \left( \int_\xi^{R} \frac{ds}{(\widehat{u}(s))^k-s} \right)\\
  &\le \lim_{R\to +\infty} v_+ \frac{R-u_+^k}{\xi-(\widehat{u}(\xi))^k}\exp \left( \int_\xi^{R} \frac{ds}{u_+^k-s} \right) \\
  &=\lim_{R\to +\infty} v_+ \frac{R-u_+^k}{\xi-(\widehat{u}(\xi))^k}\frac{\xi-u_+^k}{R-u_+^k}
= v_+ \frac{\xi-u_+^k}{\xi-(\widehat{u}(\xi))^k},
 \end{align*}
 and we have
 \begin{equation*}
  \widehat{v}(\xi)((\widehat{u}(\xi))^k-\xi) \ge v_+(u_+^k-\xi), \qquad \qquad \xi\in (\xi_\sigma,+\infty).
 \end{equation*}
Now, integrating the second equation of \eqref{sysVis2} twice on $[\xi_3,\xi]$, we get
 \begin{align*}
  \widehat{u}(\xi_3)-\widehat{u}(\xi)&=-\widehat{u}'(\xi_3) \int_{\xi_3}^\xi \exp \left( \int_{\xi_3}^r \frac{\widehat{v}(s)((\widehat{u}(s))^k-s)}{\varepsilon}ds \right) dr\\
  &\ge -\widehat{u}'(\xi_3) \int_{\xi_3}^\xi \exp \left( \int_{\xi_3}^r \frac{v_+(u_+^k-s)}{\varepsilon} ds \right) dr\\
  &= -\widehat{u}'(\xi_3) \int_{\xi_3}^\xi \exp \left( \frac{v_+}{\varepsilon} \left( \left(u_+^k-\xi_3 \right)(r-\xi_3)-\frac12(r-\xi_3)^2 \right) \right) dr\\
  &= -\widehat{u}'(\xi_3) \int_{0}^{\xi-\xi_3} \exp \left( \frac{v_+}{\varepsilon} \left( \left(u_+^k-\xi_3 \right)r-\frac12 r^2 \right) \right) dr.
 \end{align*}
Letting $\xi \to +\infty$, we get
\begin{align*}
 u_--u_+ &\ge -\widehat{u}'(\xi_3) \int_{0}^{+\infty} \exp \left( \frac{v_+}{\varepsilon} \left( \left(u_+^k-\xi_3 \right)r-\frac12 r^2 \right) \right) dr\\
 &\ge -\widehat{u}'(\xi_3) \int_{0}^{2\varepsilon} \exp \left( \frac{v_+}{\varepsilon} \left( \left(u_+^k-\xi_3 \right)r-\frac12 r^2 \right) \right) dr\\
 &\ge -\widehat{u}'(\xi_3) \sqrt{\varepsilon}A_3
\end{align*}
for $0\le \varepsilon \le 1$, where $A_3$ is a constant independent of $\varepsilon$. Thus
\begin{equation*}
 |\widehat{u}'(\xi_3)| \le \frac{u_--u_+}{\sqrt{\varepsilon}A_3}.
\end{equation*}
So
\begin{equation} \label{eqC}
 |\widehat{u}'(\xi)| \le \frac{u_--u_+}{\sqrt{\varepsilon}A_3}\exp \left( \int_{\xi_3}^\xi \frac{\widehat{v}(s)((\widehat{u}(s))^k-s)}{\varepsilon}ds \right).
\end{equation}
For $\xi>\xi_3$,
\begin{align*}
 \widehat{v}(\xi)&=\lim_{R \to +\infty} v_+ \frac{R-u_+^k}{\xi-(\widehat{u}(\xi))^k}\exp \left( \int_\xi^R \frac{ds}{(\widehat{u}(s))^k-s} \right)\\
 &\ge \lim_{R \to +\infty} v_+ \frac{R-u_+^k}{\xi-(\widehat{u}(\xi))^k}\exp \left( \int_\xi^R \frac{ds}{(\widehat{u}(\xi_3))^k-s} \right)\\
 &=v_+\frac{\xi-(\widehat{u}(\xi_3))^k}{\xi-(\widehat{u}(\xi))^k} \lim_{R \to +\infty} \frac{R-u_+^k}{R-(\widehat{u}(\xi_3))^k}
 =v_+\frac{\xi-(\widehat{u}(\xi_3))^k}{\xi-(\widehat{u}(\xi))^k}
\end{align*}
and we have
\begin{equation} \label{eqD}
 \widehat{v}(\xi)((\widehat{u}(\xi))^k-\xi) \le v_+((\widehat{u}(\xi_3))^k-\xi), \qquad \qquad \xi >\xi_3.
\end{equation}
From \eqref{eqC} and \eqref{eqD} we have
\begin{equation*}
 |\widehat{u}'(\xi)| \le \frac{u_--u_+}{\sqrt{\varepsilon}A_3}\exp \left( -\frac{v_+}{\varepsilon} \int_{\xi_3}^\xi \left( s- (\widehat{u}(\xi_3))^k \right) ds \right)
\end{equation*}
which implies that
$$ \lim_{\varepsilon \to 0+} \widehat{u}_\xi^\varepsilon(\xi)=0, \qquad \mbox{uniformly for }\xi \ge \xi_\sigma+\eta. $$
Now, we choose $\xi$ and $\xi_4$ such that $\xi>\xi_4 \ge \xi_\sigma+\eta$.
From the relation
$$ \widehat{u}(\xi_4)-\widehat{u}(\xi)=-\widehat{u}'(\xi_4)\int_{\xi_4}^\xi \exp \left( \int_{\xi_4}^r
\frac{\widehat{v}(s)((\widehat{u}(s))^k-s)}{\varepsilon} ds \right) dr, $$
we get
\begin{align*}
 | \widehat{u}(\xi_4)-\widehat{u}(\xi)| &\le |\widehat{u}'(\xi_4)| \int_{\xi_4}^\xi \exp \left(
-\frac{A_4}{\varepsilon}(r-\xi_4) \right)dr \\
&\le \frac{\varepsilon}{A_4}|\widehat{u}'(\xi_4)| \left(1-\exp \left(
\frac{A_4}{\varepsilon}(\xi_4-\xi) \right) \right),
\end{align*}
where $A_4=v_+ \left( \xi_4- (\widehat{u}(\xi_4))^k \right)$.
When $\xi \to+\infty$, we obtain
\begin{equation*}
 |\widehat{u}(\xi_4)-u_+| \le \frac{\varepsilon}{A_4}|\widehat{u}'(\xi_4)|,
\end{equation*}
which implies that
\begin{equation*}
 \lim_{\varepsilon \to 0+} \widehat{u}^\varepsilon(\xi)=u_+, \qquad \mbox{uniformly
for } \xi \ge \xi_\sigma+\eta.
\end{equation*}
The results for $\xi <\xi_\sigma-\eta$ can be obtained analogously.
\end{proof}

\begin{lemma} \label{lemmaB}
 For any $\eta>0$,
 \begin{equation*}
  \lim_{\varepsilon \to 0+}\widehat{v}^\varepsilon(\xi)=\begin{cases}
                             v_-, &\mbox{if } \xi<\xi_\sigma-\eta,\\
                             v_+, &\mbox{if } \xi>\xi_\sigma+\eta,
                            \end{cases}
 \end{equation*}
uniformly, with respect to $\xi$.
\end{lemma}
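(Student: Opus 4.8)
The plan is to work from the explicit representation of $\widehat{v}^\varepsilon$ recorded in Theorem \ref{Thm3.7}, namely $\widehat{v}^\varepsilon(\xi)=v_-\exp\!\big(-\int_{-\infty}^\xi\frac{((\widehat{u}^\varepsilon(s))^k)'}{(\widehat{u}^\varepsilon(s))^k-s}\,ds\big)$ for $\xi<\xi_\sigma^\varepsilon$ and $\widehat{v}^\varepsilon(\xi)=v_+\exp\!\big(\int_\xi^{+\infty}\frac{((\widehat{u}^\varepsilon(s))^k)'}{(\widehat{u}^\varepsilon(s))^k-s}\,ds\big)$ for $\xi>\xi_\sigma^\varepsilon$, and to show that in each case the exponent tends to $0$ uniformly in $\xi$ on the relevant half-line. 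First I would fix $\eta>0$ and restrict to $\varepsilon$ so small that $|\xi_\sigma^\varepsilon-\xi_\sigma|<\eta/2$ (legitimate, since we stay along the subsequence of Lemma \ref{lemmaA}); then $\xi>\xi_\sigma+\eta$ forces $\xi>\xi_\sigma^\varepsilon$, so the second representation applies there, and symmetrically the first applies for $\xi<\xi_\sigma-\eta$.

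Next I would estimate the integrand. For the denominator, set $g^\varepsilon(s):=(\widehat{u}^\varepsilon(s))^k-s$; since $\widehat{u}^\varepsilon$ is decreasing and $k$ is odd, $g^\varepsilon$ is strictly decreasing, and it vanishes at $\xi_\sigma^\varepsilon$. Writing $g^\varepsilon(s)=g^\varepsilon(s)-g^\varepsilon(\xi_\sigma^\varepsilon)=\big[(\widehat{u}^\varepsilon(s))^k-(\widehat{u}^\varepsilon(\xi_\sigma^\varepsilon))^k\big]-(s-\xi_\sigma^\varepsilon)$ and using monotonicity of $t\mapsto t^k$ gives $|g^\varepsilon(s)|\ge|s-\xi_\sigma^\varepsilon|\ge\eta/2$ whenever $|s-\xi_\sigma|\ge\eta$. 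For the numerator, $((\widehat{u}^\varepsilon(s))^k)'=k(\widehat{u}^\varepsilon(s))^{k-1}\widehat{u}^\varepsilon_\xi(s)$, and since $\widehat{u}^\varepsilon$ is monotone between $u_-$ and $u_+$ we have $|\widehat{u}^\varepsilon(s)|\le M:=\max\{|u_-|,|u_+|\}$ for all $s$ and all $\varepsilon$. Combining these bounds with the monotonicity of $\widehat{u}^\varepsilon$ and $\widehat{u}^\varepsilon(+\infty)=u_+$,
$$\Big|\int_\xi^{+\infty}\frac{((\widehat{u}^\varepsilon(s))^k)'}{(\widehat{u}^\varepsilon(s))^k-s}\,ds\Big|\le\frac{2kM^{k-1}}{\eta}\int_\xi^{+\infty}|\widehat{u}^\varepsilon_\xi(s)|\,ds=\frac{2kM^{k-1}}{\eta}\big(\widehat{u}^\varepsilon(\xi)-u_+\big)\le\frac{2kM^{k-1}}{\eta}\big(\widehat{u}^\varepsilon(\xi_\sigma+\eta)-u_+\big),$$
the last inequality again by monotonicity. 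By Lemma \ref{lemmaA} the right-hand side tends to $0$ as $\varepsilon\to0+$, so, using $|e^x-1|\le|x|e^{|x|}$, $\widehat{v}^\varepsilon(\xi)\to v_+$ uniformly for $\xi>\xi_\sigma+\eta$. The case $\xi<\xi_\sigma-\eta$ is handled the same way, with $\int_{-\infty}^\xi|\widehat{u}^\varepsilon_\xi(s)|\,ds=u_--\widehat{u}^\varepsilon(\xi)\le u_--\widehat{u}^\varepsilon(\xi_\sigma-\eta)\to0$ uniformly.

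I do not anticipate a genuine obstacle. The only point that needs care is that the exponents are integrals over half-lines, so pointwise (or even locally uniform) smallness of $\widehat{u}^\varepsilon_\xi$ from Lemma \ref{lemmaA} is not directly enough; this is circumvented by noting that monotonicity of $\widehat{u}^\varepsilon$ converts $\int|\widehat{u}^\varepsilon_\xi|$ into the telescoping quantity $\widehat{u}^\varepsilon(\xi)-u_+$ (resp.\ $u_--\widehat{u}^\varepsilon(\xi)$), which Lemma \ref{lemmaA} controls uniformly, while the estimate $|g^\varepsilon(s)|\ge\eta/2$ keeps the denominator bounded away from zero uniformly in $\varepsilon$ and $s$. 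The remaining manipulations are routine.
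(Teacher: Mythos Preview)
Your proposal is correct and follows essentially the same approach as the paper: restrict to $\varepsilon$ with $|\xi_\sigma^\varepsilon-\xi_\sigma|<\eta/2$, bound the denominator $(\widehat{u}^\varepsilon(s))^k-s$ away from zero by $\eta/2$ on the relevant half-line, and telescope the numerator so that the exponent is controlled by $\widehat{u}^\varepsilon(\xi)-u_+$ (resp.\ $u_--\widehat{u}^\varepsilon(\xi)$), which Lemma~\ref{lemmaA} drives to zero uniformly. The only cosmetic difference is that the paper telescopes $((\widehat{u}^\varepsilon)^k)'$ directly to $u_+^k-(\widehat{u}^\varepsilon(\xi))^k$, whereas you first bound $|((\widehat{u}^\varepsilon)^k)'|\le kM^{k-1}|\widehat{u}^\varepsilon_\xi|$ and then telescope $\widehat{u}^\varepsilon_\xi$; the paper's version avoids the extraneous constant $kM^{k-1}$ but the argument is otherwise identical.
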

\begin{proof}
Take $\varepsilon_0>0$ so small such that $|\xi_\sigma^\varepsilon-\xi_\sigma| < \frac{\eta}{2}$ whenever $0<\varepsilon<\varepsilon_0$. For any
$\xi > \xi_\sigma+\eta$ and $\varepsilon <\varepsilon_0$, we have $$ \xi >\xi_\sigma^\varepsilon+\frac{\eta}{2} $$
and
$$ \widehat{v}^\varepsilon(\xi)=v_+ \exp \left( \int_{\xi}^\infty \frac{((\widehat{u}^\varepsilon(s))^k)'}{(\widehat{u}^\varepsilon(s))^k-s}ds \right). $$
For any $s\in [\xi,+\infty)$, we have
\begin{align*}
 (\widehat{u}^\varepsilon(s))^k-s &< (\widehat{u}^\varepsilon(\xi))^k-\xi=(1-((\widehat{u}^\varepsilon(\zeta))^k)')(\xi_\sigma^\varepsilon-\xi)
\le -\frac{\eta}{2}.
\end{align*}
As $\widehat{u}$ is decreasing, we have that $((\widehat{u}(\xi))^k)'=k(\widehat{u}(\xi))^{k-1}\widehat{u}'(\xi)<0$, and
$$ \frac{((\widehat{u}(s))^k)'}{(\widehat{u}^\varepsilon(s))^k-s} <-\frac{2}{\eta}((\widehat{u}(s))^k)' , \qquad \mbox{for any } s \in [\xi,+\infty). $$
Now, in the last inequality, integrating on $[\xi,+\infty)$ we have
$$ 0\le \int_{\xi}^\infty \frac{((\widehat{u}^\varepsilon(s))^k)'}{(\widehat{u}^\varepsilon(s))^k-s} ds \le -\frac{2}{\eta}\int_{\xi}^\infty((u^\varepsilon(s))^k)' ds= -\frac{2}{\eta}(u_+^k-(\widehat{u}^\varepsilon(\xi))^k), $$
so
\begin{equation} \label{inq1}
1 \le \exp \left( \int_{\xi}^\infty \frac{((\widehat{u}^\varepsilon(s))^k)'}{(\widehat{u}^\varepsilon(s))^k-s} ds \right) \le \exp \left( -\frac{2}{\eta}(u_+^k-(\widehat{u}^\varepsilon(\xi))^k) \right).
\end{equation}
By Lemma~\ref{lemmaA} we have that $ \lim\limits_{\varepsilon \to 0+} \widehat{u}^\varepsilon(\xi)=u_+$, and from \eqref{inq1} we have 
$$ \lim_{\varepsilon \to 0+} \exp \left( \int_{\xi}^\infty \frac{((\widehat{u}^\varepsilon(s))^k)'}{(\widehat{u}^\varepsilon(s))^k-s} ds \right)=1 $$ and
$$ \lim_{\varepsilon \to 0+} \widehat{v}^\varepsilon(\xi)= \lim_{\varepsilon \to 0+} v_+ \exp \left( \int_{\xi}^\infty \frac{((\widehat{u}^\varepsilon(s))^k)'}{(\widehat{u}^\varepsilon(s))^k-s} ds \right)=v_+, \ \mbox{uniformly for } \xi>\xi_\sigma+\eta. $$
Similarly, we also obtain that $\lim\limits_{\varepsilon\to0}\widehat{v}^\varepsilon(\xi)=v_{-}$, uniformly for $\xi<\xi_\sigma-\eta$.
\end{proof}
Now, we study the limit behavior of $(\widehat{v}^\varepsilon, \widehat{u}^\varepsilon)$ in the
neighborhood of $\xi_\sigma$ as $\varepsilon \to 0+$.

\begin{theorem}
 Denote
\begin{equation} \label{limSigma}
 \sigma=\xi_\sigma=\lim_{\varepsilon \to 0+}
\xi_\sigma^\varepsilon=\lim_{\varepsilon \to 0+}
(\widehat{u}^\varepsilon(\xi_\sigma^\varepsilon))^k=(\widehat{u}(\sigma))^k.
\end{equation}
  Then
  \begin{equation*}
   \lim_{\varepsilon \to 0+} (\widehat{v}^\varepsilon(\xi), \widehat{u}^\varepsilon(\xi))=\begin{cases}
                                                                          (v_-,u_-), &\mbox{if }\xi <\sigma,\\
                                                                          (w_0\cdot \delta, u_\delta), &\mbox{if }\xi =\sigma,\\
                                                                          (v_+,u_+), &\mbox{if }\xi >\sigma,
                                                                         \end{cases}
  \end{equation*}
where $\widehat{v}^\varepsilon(\xi)$ converges in the sense of the distributions to the sum of a step function and a Dirac measure $\delta$ with weight $w_0=-\sigma(v_--v_+)+(v_-u_-^k-v_+u_+^k)$.
 \end{theorem}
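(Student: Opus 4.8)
The plan is to test $\widehat{v}^\varepsilon$ against an arbitrary $\phi \in C_0^\infty(\mathbb{R})$, say with $\operatorname{supp}\phi \subset [-M,M]$, split the integral $\int_{\mathbb{R}}\widehat{v}^\varepsilon\phi\,d\xi$ at the points $\xi_\sigma\pm\eta$ for a small $\eta>0$, pass to the limit $\varepsilon\to 0+$ with $\eta$ fixed, and only afterwards let $\eta\to 0+$. The behaviour of $\widehat{u}^\varepsilon$ away from $\sigma$, together with the identification $u_\delta=\lim_{\varepsilon\to0+}\widehat{u}^\varepsilon(\xi_\sigma^\varepsilon)=\sigma^{1/k}$ (recall $\sigma=(\widehat{u}(\sigma))^k$ from \eqref{limSigma} and that $k$ is odd), is already contained in Lemma~\ref{lemmaA}; so the substance of the theorem is the distributional limit of $\widehat{v}^\varepsilon$.

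First I would dispose of the two outer pieces $\int_{-M}^{\xi_\sigma-\eta}\widehat{v}^\varepsilon\phi\,d\xi$ and $\int_{\xi_\sigma+\eta}^{M}\widehat{v}^\varepsilon\phi\,d\xi$. On each of these compact sets, which are bounded away from $\xi_\sigma$, one checks from \eqref{solRho1}--\eqref{solRho2} (with $R=\infty$, as in Theorem~\ref{Thm3.7}) that $\widehat{v}^\varepsilon$ is bounded uniformly in $\varepsilon$, since $(\widehat{u}^\varepsilon(\xi))^k-\xi$ stays bounded away from $0$ there once $|\xi_\sigma^\varepsilon-\xi_\sigma|<\eta/2$; meanwhile Lemmas~\ref{lemmaA} and~\ref{lemmaB} give $\widehat{v}^\varepsilon\to v_-$ (resp.\ $v_+$) uniformly. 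By dominated convergence these pieces tend to $\int_{-M}^{\xi_\sigma-\eta}v_-\phi\,d\xi$ and $\int_{\xi_\sigma+\eta}^{M}v_+\phi\,d\xi$, which in turn converge to $\int_{-\infty}^{\sigma}v_-\phi\,d\xi$ and $\int_{\sigma}^{\infty}v_+\phi\,d\xi$ as $\eta\to0+$ (using $\sigma=\xi_\sigma$).

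The core step is the middle piece. Writing $\phi=\phi(\sigma)+(\phi-\phi(\sigma))$, the contribution of $\phi-\phi(\sigma)$ is bounded by $\omega_\phi(\eta)\,I^\varepsilon_\eta$ with $\omega_\phi$ the modulus of continuity of $\phi$ and $I^\varepsilon_\eta:=\int_{\xi_\sigma-\eta}^{\xi_\sigma+\eta}\widehat{v}^\varepsilon\,d\xi$, so everything reduces to computing $I^\varepsilon_\eta$. For this I would use the conservation identity $\big(\widehat{v}^\varepsilon((\widehat{u}^\varepsilon)^k-\xi)\big)'=-\widehat{v}^\varepsilon$ (the first equation of \eqref{sysVis2} rewritten, exactly as in \eqref{eq4} and the line following \eqref{eqSeq2}) together with $\lim_{\xi\to\xi_\sigma^\varepsilon\pm}\widehat{v}^\varepsilon(\xi)\big((\widehat{u}^\varepsilon(\xi))^k-\xi\big)=0$ (this is \eqref{eqEst1} and its right-hand counterpart, valid because $\widehat{v}^\varepsilon\in L^1$). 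Integrating the identity separately on $[\xi_\sigma-\eta,\xi_\sigma^\varepsilon]$ and on $[\xi_\sigma^\varepsilon,\xi_\sigma+\eta]$ and adding yields
\begin{equation*}
I^\varepsilon_\eta = \widehat{v}^\varepsilon(\xi_\sigma-\eta)\big((\widehat{u}^\varepsilon(\xi_\sigma-\eta))^k-(\xi_\sigma-\eta)\big) + \widehat{v}^\varepsilon(\xi_\sigma+\eta)\big((\xi_\sigma+\eta)-(\widehat{u}^\varepsilon(\xi_\sigma+\eta))^k\big),
\end{equation*}
which is $O(1)$ uniformly in $\varepsilon$ (this a posteriori justifies the splitting of $\phi$). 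Letting $\varepsilon\to0+$ and applying Lemmas~\ref{lemmaA} and~\ref{lemmaB} at $\xi_\sigma\mp\eta$ gives $\lim_{\varepsilon\to0+}I^\varepsilon_\eta=v_-(u_-^k-\xi_\sigma+\eta)+v_+(\xi_\sigma+\eta-u_+^k)$, and then $\eta\to0+$ gives $v_-(u_-^k-\sigma)+v_+(\sigma-u_+^k)=(v_-u_-^k-v_+u_+^k)-\sigma(v_--v_+)=w_0$. Collecting the three pieces and letting $\eta\to0+$ shows $\widehat{v}^\varepsilon\to v_-\mathbf{1}_{\{\xi<\sigma\}}+v_+\mathbf{1}_{\{\xi>\sigma\}}+w_0\,\delta(\cdot-\sigma)$ in $\mathcal{D}'(\mathbb{R})$, which is the assertion; positivity of $w_0$ (so that this is a genuine delta shock) follows from $u_+^k<\sigma<u_-^k$, itself a consequence of the monotonicity of $\widehat{u}$.

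The main obstacle is the legitimacy of the iterated limit $\varepsilon\to0+$ then $\eta\to0+$: one needs a bound on $I^\varepsilon_\eta$ that is uniform in $\varepsilon$ and tends to $w_0$ as $\eta\to0+$, and simultaneously uniform-in-$\varepsilon$ control of $\widehat{v}^\varepsilon$ on the fixed compacts $[-M,\xi_\sigma-\eta]$ and $[\xi_\sigma+\eta,M]$. Both are supplied by the two ingredients above --- the closed form for $I^\varepsilon_\eta$ and the uniform lower bound on $(\widehat{u}^\varepsilon)^k-\xi$ away from $\xi_\sigma$ --- but one must first fix $\varepsilon_0$ with $|\xi_\sigma^\varepsilon-\xi_\sigma|<\eta/2$ for $\varepsilon<\varepsilon_0$, so that $\xi_\sigma^\varepsilon$ genuinely lies in $(\xi_\sigma-\eta,\xi_\sigma+\eta)$ and the decomposition $\widehat{v}^\varepsilon=\widehat{v}_1^\varepsilon$ on $(\xi_\sigma-\eta,\xi_\sigma^\varepsilon)$, $\widehat{v}^\varepsilon=\widehat{v}_2^\varepsilon$ on $(\xi_\sigma^\varepsilon,\xi_\sigma+\eta)$ from \eqref{solweak1} applies.
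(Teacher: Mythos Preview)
Your argument is correct and covers the stated theorem, but the route differs from the paper's. The paper follows the Tan--Zhang--Zheng device of \emph{sloping test functions}: it starts from the weak identity
\[
\int \widehat{v}^\varepsilon \phi\,d\xi \;=\; \int \widehat{v}^\varepsilon\big((\widehat{u}^\varepsilon)^k-\xi\big)\phi'\,d\xi
\]
(the first equation of \eqref{sysVis2}), restricts first to $\phi$ that are constant on a neighbourhood $\Omega$ of $\sigma$ so that the right-hand integrand vanishes near $\sigma$, passes to the limit there via Lemmas~\ref{lemmaA}--\ref{lemmaB}, and only then approximates a general test function by sloping ones using a uniform $L^1$ bound on $\widehat{v}^\varepsilon$. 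You bypass this two-step approximation entirely: by integrating the pointwise identity $\big(\widehat{v}^\varepsilon((\widehat{u}^\varepsilon)^k-\xi)\big)'=-\widehat{v}^\varepsilon$ across $\xi_\sigma^\varepsilon$ (using \eqref{eqEst1}) you obtain an \emph{exact closed formula} for the mass $I^\varepsilon_\eta$ in terms of boundary values of $\widehat{v}^\varepsilon$ and $\widehat{u}^\varepsilon$ at $\xi_\sigma\pm\eta$, whence the weight $w_0$ drops out immediately. Your approach is more elementary and more transparent about where $w_0$ comes from; the paper's approach is the standard one in the literature and extends with no change to the second equation (yielding the relation $w_0u_\delta=-\sigma[\widehat{v}\widehat{u}]+[\widehat{v}\widehat{u}^{k+1}]$ and hence the full system \eqref{GRHsys2}), which you do not treat but which is, strictly speaking, beyond the theorem as stated.
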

 \begin{proof}
As $\sigma=\xi_\sigma=\lim\limits_{\varepsilon \to 0+}
(\widehat{u}^\varepsilon(\xi_\sigma^\varepsilon))^k=(\widehat{u}(\sigma))^k$, then we have
\begin{equation} \label{entropyCond}
 u_+^k<\sigma <u_-^k.
\end{equation}
Let $\xi_1$ and $\xi_2$ be real numbers such that $\xi_1<\sigma<\xi_2$ and $\phi\in C_0^\infty([\xi_1,\xi_2])$ such that
$\phi(\xi)\equiv \phi(\sigma)$ for $\xi$ in a
neighborhood $\Omega$ of $\sigma$, $\Omega \subset (\xi_1,\xi_2)$ \footnote{The function $\phi$ is called a {\em sloping test function} \cite{TZZ}}. Then $\xi_\sigma^\varepsilon \in \Omega$ whenever $0<\varepsilon<\varepsilon_0$. 
From \eqref{sysVis2} we have
\begin{equation} \label{eqL1}
-\int_{\xi_1}^{\xi_2} \widehat{v}^\varepsilon((\widehat{u}^\varepsilon)^k-\xi)\phi' d\xi+\int_{\xi_1}^{\xi_2} \widehat{v}^\varepsilon \phi d\xi=0.
\end{equation}
For $\alpha_1, \alpha_2 \in \Omega$, $\alpha_1,\alpha_2$ near $\sigma$ such that $\alpha_1 <\sigma <\alpha_2$, we write
\begin{equation*}
 \int_{\xi_1}^{\xi_2} \widehat{v}^\varepsilon((\widehat{u}^\varepsilon)^k-\xi)\phi' d\xi=\int_{\xi_1}^{\alpha_1} \widehat{v}^\varepsilon((\widehat{u}^\varepsilon)^k-\xi)\phi' d\xi+\int_{\alpha_2}^{\xi_2} \widehat{v}^\varepsilon((\widehat{u}^\varepsilon)^k-\xi)\phi' d\xi,
\end{equation*}
and from Lemmas~\ref{lemmaA} and \ref{lemmaB}, we obtain
\begin{align*}
 \lim_{\varepsilon \to 0+} \int_{\xi_1}^{\xi_2} \widehat{v}^\varepsilon((\widehat{u}^\varepsilon)^k-\xi)\phi' d\xi =&
 \int_{\xi_1}^{\alpha_1} v_-(u_-^k-\xi)\phi' d\xi+\int_{\alpha_2}^{\xi_2} \widehat{v}_+(u_+^k-\xi)\phi' d\xi\\
 =& \left( v_-u_-^k-v_+u_+^k-v_-\alpha_1+v_+\alpha_2 \right) \phi(\sigma)\\
 &+\int_{\xi_1}^{\alpha_1}v_-\phi(\xi)d\xi
 +\int_{\alpha_2}^{\xi_2}v_+\phi(\xi)d\xi
\end{align*}
Then taking $\alpha_1 \to \sigma-$, $\alpha_2 \to \sigma+$, we arrive at
\begin{equation} \label{eqL2}
 \lim_{\varepsilon \to 0+} \int_{\xi_1}^{\xi_2} \widehat{v}^\varepsilon((\widehat{u}^\varepsilon)^k-\xi)\phi' d\xi=
 \left( -[\widehat{v}]\sigma+[\widehat{v} \widehat{u}^k] \right) \phi(\sigma)+\int_{\xi_1}^{\xi_2} J(\xi-\sigma)\phi(\xi)d\xi
\end{equation}
where $[q]=q_--q_+$ and $$J(x)=\begin{cases}
                               v_-, &\mbox{if }x<0,\\
                               v_+, &\mbox{if }x>0.
                              \end{cases}$$
From \eqref{eqL1} and \eqref{eqL2}, we get
\begin{equation*}
 \lim_{\varepsilon \to 0+} \int_{\xi_1}^{\xi_2} (\widehat{v}^\varepsilon-J(\xi-\sigma))\phi(\xi)d\xi=\left( -[\widehat{v}]\sigma+[\widehat{v} \widehat{u}^k] \right) \phi(\sigma).
\end{equation*}
for all sloping test functions $\phi \in C_0^\infty ([\xi_1,\xi_2])$.\\
For an arbitrary $\psi \in C_0^\infty([\xi_1,\xi_2])$, we take a sloping test function $\phi$, such that $\phi(\sigma)=\psi(\sigma)$ and $$ \max_{[\xi_1,\xi_2]}|\psi-\phi|<\mu, $$ for a sufficiently small $\mu>0$. As $\widehat{v}^\varepsilon \in L^1([\xi_1,\xi_2)$ uniformly, we obtain
\begin{align*}
 \lim_{\varepsilon \to 0+} \int_{\xi_1}^{\xi_2} (\widehat{v}^\varepsilon-J(\xi-\sigma))\psi(\xi)d\xi &= 
 \lim_{\varepsilon \to 0+} \int_{\xi_1}^{\xi_2} (\widehat{v}^\varepsilon-J(\xi-\sigma))\phi(\xi)d\xi+O(\mu)\\
 &=\left( -[\widehat{v}]\sigma+[\widehat{v} \widehat{u}^k] \right) \phi(\sigma)+O(\mu)\\
 &=\left( -[\widehat{v}]\sigma+[\widehat{v} \widehat{u}^k] \right) \psi(\sigma)+O(\mu).
\end{align*}
Then, when $\mu \to 0+$, we find that 
\begin{equation} \label{deltaStr}
 \lim_{\varepsilon \to 0+} \int_{\xi_1}^{\xi_2} (\widehat{v}^\varepsilon-J(\xi-\sigma))\psi(\xi)d\xi=\left( -[\widehat{v}]\sigma+[\widehat{v} \widehat{u}^k] \right) \psi(\sigma)
\end{equation}
holds for all test functions $\psi \in C_0^\infty ([\xi_1,\xi_2])$. Thus, $\widehat{v}^\varepsilon$ converges in the sense of the distributions to the sum of a step function and a Dirac delta function with strength $-[\widehat{v}]\sigma+[\widehat{v} \widehat{u}^k]$.
In a similar way, from
\begin{equation} \label{eqR1}
 -\int_{\xi_1}^{\xi_2} \left( \widehat{v}^\varepsilon ((\widehat{u}^\varepsilon)^k-\xi) \right) \widehat{u}^\varepsilon \phi' d\xi+\int_{\xi_1}^{\xi_2} \widehat{v}^\varepsilon \widehat{u}^\varepsilon \phi d\xi = \varepsilon \int_{\xi_1}^{\xi_2} (\widehat{u}^\varepsilon)'' \phi d\xi
\end{equation}
we can obtain
\begin{equation*}
 \lim_{\varepsilon \to 0+} \int_{\xi_1}^{\xi_2} (\widehat{v}^\varepsilon \widehat{u}^\varepsilon-\widetilde{J}(\xi-\sigma))\phi(\xi)d\xi=\left( -[\widehat{v} \widehat{u}]\sigma+\left[ \widehat{v} \widehat{u}^{k+1} \right] \right) \phi(\sigma)
\end{equation*}
for all test functions $\phi \in C_0^\infty ([\xi_1,\xi_2])$, where 
$$ \widetilde{J}(x)=\begin{cases}
                               v_-u_-, &\mbox{if }x<0,\\
                               v_+u_+, &\mbox{if }x>0.
                              \end{cases}$$
Thus $\widehat{v} \widehat{u}$ also converges in the sense of the distributions to the sum of a step function and a Dirac delta function with strength $-[\widehat{v} \widehat{u}]\sigma+\left[ \widehat{v} \widehat{u}^{k+1} \right]$.\\
 If we take the test function in \eqref{eqR1} as $\frac{\psi}{\widetilde{u}^\varepsilon+\nu}$, $\nu >0$, where $\widetilde{u}^\varepsilon$ is a modified function satisfying $\widehat{u}^\varepsilon(\sigma)$ in $\Omega$ and $\widehat{u}^\varepsilon$ outside $\Omega$, and let $\nu \to 0+$, we find
 \begin{equation}\label{deltaStr2}
  \lim_{\varepsilon \to 0+} \int_{\xi_1}^{\xi_2} (\widehat{v}^\varepsilon -J(\xi-\sigma))\psi d\xi \cdot \widehat{u}(\sigma) = \left( -[\widehat{v} \widehat{u}]\sigma+\left[\widehat{v} \widehat{u}^{k+1} \right] \right) \psi(\sigma)
 \end{equation}
for all test functions $\psi \in C_0^\infty ([\xi_1,\xi_2])$.
Let $w_0$ be the strength of the Dirac delta function in $\widehat{v}$, and denote
$$ \widehat{u}_\delta=\lim_{\varepsilon \to 0+} \widehat{u}^\varepsilon(\xi_\sigma^\varepsilon)=\widehat{u}(\sigma). $$
From \eqref{limSigma}, \eqref{deltaStr} and \eqref{deltaStr2} it follows that
\begin{equation} \label{GRHsys2}
 \begin{cases}
  \sigma = (\widehat{u}_\delta)^k,\\
  w_0=-\sigma[\widehat{v}]+[\widehat{v} \widehat{u}^k],\\
  w_0 u_\delta = -\sigma[\widehat{v} \widehat{u}]+\left[ \widehat{v} \widehat{u}^{k+1} \right].
 \end{cases}
\end{equation}
Under the entropy condition \eqref{entropyCond} the system \eqref{GRHsys2} admits a unique solution $(\sigma, w_0,u_\delta)$.
\end{proof}
Then we get the following theorem.
\begin{theorem} \label{ThmFinal}
 Suppose $u_->u_+$. Let $(\widehat{v}^\varepsilon(x,t),\widehat{u}^\varepsilon(x,t))$ be the similarity solution of \eqref{sysVis1}--\eqref{datsysVisc1}. Then the limit $$ \lim_{\varepsilon \to 0+}(\widehat{v}^\varepsilon(x,t),\widehat{u}^\varepsilon(x,t))=(\widehat{v}(x,t),\widehat{u}(x,t)) $$
 exists in the measure sense and $(\widehat{v},\widehat{u})$ solves \eqref{sys1}--\eqref{datsysVisc1}.
 Moreover, 
 \begin{small}
 \begin{equation*}
  (\widehat{v}(x,t),\widehat{u}(x,t))=\begin{cases}
                      (v_-,u_-), &\mbox{if } x<\frac{\sigma}{\alpha k} (1-e^{-\alpha kt}),\\
                      (\frac{w_0}{\alpha k}(1-e^{-\alpha kt})\delta(x-\frac{\sigma}{\alpha k} (1-e^{-\alpha kt})), u_\delta), &\mbox{if } x=\frac{\sigma}{\alpha k} (1-e^{-\alpha kt}),\\
                      (v_+,u_+), &\mbox{if } x>\frac{\sigma}{\alpha k} (1-e^{-\alpha kt}),
                     \end{cases}
 \end{equation*} 
 \end{small}
where the constants $\sigma$, $w_0$ , and $u_\delta$ are determined uniquely by the entropy condition $u_+^k <\sigma <u_-^k$ and
\begin{equation*}
 \begin{cases}
  \sigma =  u_\delta^k,\\
  w_0=-\sigma[\widehat{v}]+[\widehat{v} \widehat{u}^k],\\
  w_0 u_\delta = -\sigma[\widehat{v} \widehat{u}]+\left[ \widehat{v} \widehat{u}^{k+1} \right].
 \end{cases}
\end{equation*}
\end{theorem}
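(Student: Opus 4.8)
The plan is to deduce this statement from the self-similar analysis already carried out and from undoing the similarity change of variables $\xi=\frac{\alpha kx}{1-e^{-\alpha kt}}$ that turns \eqref{sysVis1} into \eqref{sysVis2}. By definition the viscous similarity solution is $\widehat v^\varepsilon(x,t)=\widehat v^\varepsilon(\xi)$, $\widehat u^\varepsilon(x,t)=\widehat u^\varepsilon(\xi)$, where $(\widehat v^\varepsilon(\cdot),\widehat u^\varepsilon(\cdot))$ is the solution of \eqref{sysVis2}--\eqref{datsysVisc2} constructed in Section~2, and the preceding theorem already identifies $\lim_{\varepsilon\to0+}(\widehat v^\varepsilon(\xi),\widehat u^\varepsilon(\xi))$: off $\xi=\sigma$ it is the constant state $(v_\pm,u_\pm)$, while at $\xi=\sigma$ a Dirac mass appears, of weight $w_0$ in $\widehat v$ and $w_0u_\delta$ in $\widehat v\widehat u$, with $(\sigma,w_0,u_\delta)$ the unique solution of \eqref{limSigma}--\eqref{GRHsys2} under $u_+^k<\sigma<u_-^k$. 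So the real work is (i) to transport this limit back to the $(x,t)$-plane and (ii) to check the resulting measure solves \eqref{sys1}.

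For (i): for each fixed $t>0$ the map $x\mapsto\xi$ is the dilation $\xi=c(t)x$ with $c(t)=\frac{\alpha k}{1-e^{-\alpha kt}}>0$, so $\delta(c(t)x-\sigma)=\frac1{c(t)}\delta\bigl(x-\frac{\sigma}{c(t)}\bigr)$ with $\frac{\sigma}{c(t)}=\frac{\sigma}{\alpha k}(1-e^{-\alpha kt})=:x(t)$ and $\frac{w_0}{c(t)}=\frac{w_0}{\alpha k}(1-e^{-\alpha kt})=:w(t)$. Testing the $\mathcal D'$-limit of the preceding theorem against $\psi\in C_0^\infty(\mathbb R)$ and changing variables then gives, for each $t>0$, that $\widehat v^\varepsilon(\cdot,t)$ converges in $\mathcal D'(\mathbb R)$ to the step function $v_\pm$ with jump at $x(t)$ plus $w(t)\delta(x-x(t))$, likewise for $\widehat v^\varepsilon\widehat u^\varepsilon$ with weight $w(t)u_\delta$, and $\widehat u^\varepsilon(\cdot,t)\to u_\pm$ uniformly off $x(t)$ by Lemmas~\ref{lemmaA} and~\ref{lemmaB}. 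To upgrade this to convergence in the measure sense on a strip $\mathbb R\times[t_1,t_2]$ with $0<t_1<t_2$, I would integrate the test identity in $t$: the $x$-integrals converge for every fixed $t$, and the uniform bounds of Lemma~\ref{lemma3.6} together with the uniform mass estimates \eqref{eqA}--\eqref{eqB} (which bound the mass of $\widehat v^\varepsilon(\cdot,t)$ near the shock independently of $\varepsilon$ and of $t$ in the strip) permit passing the limit through the $t$-integral by dominated convergence, yielding the displayed formula for $(\widehat v,\widehat u)$.

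For (ii): the initial condition \eqref{datsysVisc1} is immediate because $x(0)=0$ and $w(0)=0$, so at $t=0$ the limit collapses to the Riemann data. For the equations one can either insert the weak form of \eqref{sysVis1} satisfied by $(\widehat v^\varepsilon,\widehat u^\varepsilon)$ and let $\varepsilon\to0+$, the viscous term $\varepsilon\frac1{\alpha k}e^{-\alpha kt}(1-e^{-\alpha kt})\widehat u^\varepsilon_{xx}$ vanishing in $\mathcal D'$ by the control on $\varepsilon\widehat u^\varepsilon_{\xi\xi}$ coming from \eqref{eqDiff}, or --- more transparently --- verify the generalized Rankine--Hugoniot conditions by hand. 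Writing $\widehat v=V_0+w(t)\delta(x-x(t))$ and $\widehat v\widehat u^k=V_0U_0^k+w(t)u_\delta^k\delta(x-x(t))$ with $V_0,U_0$ the piecewise-constant parts, splitting the space-time test integral along the curve $x=x(t)$, integrating by parts in $t$ on the singular part, and using $x'(t)=\sigma e^{-\alpha kt}$ together with $\sigma=u_\delta^k$ from \eqref{GRHsys2} (which kills the term in $\phi_x$ supported on the curve), one sees that \eqref{sys1} holds iff $w'(t)=-x'(t)[\widehat v]+e^{-\alpha kt}[\widehat v\widehat u^k]$ and $(w(t)u_\delta)'=-x'(t)[\widehat v\widehat u]+e^{-\alpha kt}[\widehat v\widehat u^{k+1}]$; since $x'(t)=\sigma e^{-\alpha kt}$ and $w'(t)=w_0e^{-\alpha kt}$, these are exactly the last two equations of \eqref{GRHsys2} multiplied by $e^{-\alpha kt}$, hence true. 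Uniqueness of $(\sigma,w_0,u_\delta)$ under $u_+^k<\sigma<u_-^k$ is inherited from the preceding theorem.

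I expect the main obstacle to be step (i): converting the fixed-time ($\xi$-plane) convergence of the preceding theorem into genuine measure convergence over space-time. One must ensure that the concentrating part of $\widehat v^\varepsilon(\cdot,t)$ neither blows up nor drifts to infinity uniformly in $t$ on the strip --- this is where the uniform estimates \eqref{eqA}--\eqref{eqB} and the uniform boundedness of $\xi_\sigma^\varepsilon$ from Lemma~\ref{lemma3.6} are used --- and then justify interchanging $\lim_{\varepsilon\to0+}$ with the $t$-integration. Everything after that, namely the dilation bookkeeping and reading off the generalized Rankine--Hugoniot relations, is routine.
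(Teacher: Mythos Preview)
Your proposal is correct, and in fact considerably more detailed than what the paper does: in the paper Theorem~\ref{ThmFinal} is stated immediately after the preceding theorem with the single sentence ``Then we get the following theorem'' and no proof at all, treating the passage from the $\xi$-variable limit to the $(x,t)$-formulation as self-evident from the similarity change $\xi=\frac{\alpha kx}{1-e^{-\alpha kt}}$. Your dilation bookkeeping for the Dirac mass, the dominated-convergence argument to pass to space--time convergence, and the direct verification of the generalized Rankine--Hugoniot relations for \eqref{sys1} are all details the paper simply omits; the verification that the limit is actually a weak solution is deferred in the paper to Theorem~\ref{Thm4.1}, where an analogous computation is carried out for the original system \eqref{system_ld} rather than for \eqref{sys1}.
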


\section{Delta shock solutions for the system \texorpdfstring{\eqref{system_ld}}{(1)}}
In this section, we study the Riemann problem to the system \eqref{system_ld} with initial data \eqref{datoRiemann} when $u_- > u_+$. We need recall the following definition:
\begin{definition}
 A two-dimensional weighted delta function $w(s)\delta_L$ supported on a smooth curve $L=\{ (x(s),t(s))\, :\, a < s < b \}$, for $w\in L^1((a,b))$, is defined as
 \begin{equation*}
  \langle w(\cdot)\delta_L,\phi(\cdot,\cdot) \rangle = \int_a^b w(s) \phi(x(s),t(s)) \, ds, \quad \phi \in C_0^\infty(\mathbb{R} \times[0,\infty)). 
 \end{equation*}
\end{definition}
Now, we define a delta shock wave solution for the system \eqref{system_ld} with initial data \eqref{datoRiemann}.
\begin{definition}
 A distribution pair  $(v,u)$ is a {\em delta shock wave solution} of \eqref{system_ld} and \eqref{datoRiemann} in the sense of distribution if there exists a smooth curve $L$ and a function $w \in C^1(L)$ such that $v$ and $u$ are represented in the following form
 $$ v = \widetilde{v}(x,t)+w \delta_L \text{ and } u=\widetilde{u}(x,t), $$
 $\widetilde{v}, \widetilde{u} \in L^\infty(\mathbb{R} \times(0,\infty); \mathbb{R})$ and
 \begin{equation} \label{weakSol1}
  \begin{cases}
   \langle v, \varphi_t \rangle + \langle v u^k, \varphi_x \rangle =0,\\
   \langle v u, \varphi_t \rangle + \langle v u^{k+1} , \varphi_x \rangle = \langle \alpha vu, \varphi \rangle,
  \end{cases}
 \end{equation}
for all the test functions $\varphi \in C_0^\infty (\mathbb{R} \times(0,\infty))$, where $u|_L=u_\delta(t)$ and
\begin{align*}
 \langle v,\varphi \rangle &= \int_0^\infty \int_\mathbb{R} \widetilde{v} \varphi \, dx dt + \langle w \delta_L, \varphi \rangle,\\
 \langle v G(u),\varphi \rangle &= \int_0^\infty \int_\mathbb{R} \widetilde{v} G(\widetilde{u}) \varphi \, dx dt + \langle w G(u_\delta) \delta_L, \varphi \rangle.
\end{align*}
\end{definition}
With the previous definitions, we are going to find a solution with discontinuity $x=x(t)$ for \eqref{system_ld} of the form
\begin{equation} \label{deltaSol1}
 (v(x,t),u(x,t))=\begin{cases}
                          (v_-(x,t),u_-(x,t)), &\text{if } x<x(t),\\
                          (w(t)\delta_L,u_\delta(t)), &\text{if } x=x(t),\\
                          (v_+(x,t),u_+(x,t)), &\text{if } x>x(t),
                      \end{cases}
\end{equation}
where $v_\pm(x,t)$, $u_\pm(x,t)$ are piecewise smooth solutions of system \eqref{system_ld}, $\delta(\cdot)$ is the 
 Dirac measure supported on the curve $x(t) \in C^1$, and $x(t)$, $w(t)$ and $u_\delta(t)$ will be determined later.\\
 
Since $v(x,t)=\widehat{v}(x,t)$ and $u(x,t)=\widehat{u}(x,t)e^{-\alpha t}$, from Theorem \ref{ThmFinal}, we can establish a solution of the form \eqref{deltaSol1} to the system \eqref{system_ld} with initial data \eqref{datoRiemann}. Thus, we have the following result.
\begin{theorem} \label{Thm4.1}
 Assume that $u_->u_+$.
 Then the Riemann problem \eqref{system_ld}--\eqref{datoRiemann} admits one and only one measure solution of the
form
\begin{equation} \label{sol_final}
  (v(x,t),u(x,t))=\begin{cases}
                      (v_-,u_-e^{-\alpha t}), &\mbox{if } x<x(t),\\
                      (w(t) \delta(x-x(t)), u_\delta e^{-\alpha t}), &\mbox{if } x=x(t),\\
                      (v_+,u_+ e^{-\alpha t}), &\mbox{if } x>x(t),
                     \end{cases}
 \end{equation}
where $w(t)=\frac{w_0}{\alpha k}(1-e^{-\alpha kt})$, $x(t)=\frac{\sigma}{\alpha k} (1-e^{-\alpha kt})$ and the constants $\sigma$, $w_0$ , and $u_\delta$ are determined uniquely by the entropy condition $u_+^k <\sigma < u_-^k$ and
\begin{equation} \label{sysEqEDM-O}
 \begin{cases}
  \sigma = u_\delta^k,\\
  w_0=-\sigma(v_--v_+)+(v_- u_-^k-v_+ u_+^k),\\
  w_0 u_\delta = -\sigma(v_- u_--v_+ u_+)+ \left( v_- (u_-)^{k+1} -v_+ (u_+)^{k+1} \right).
 \end{cases}
\end{equation}
\end{theorem}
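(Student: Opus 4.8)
The plan is to obtain \eqref{sol_final} from the limit constructed in Section~3 and then to check, directly from the distributional definition, that it is a delta shock wave solution and that it is the only solution of the prescribed form. The bridge is the change of unknowns $(v,u)=(\widehat v,\widehat u e^{-\alpha t})$ relating \eqref{system_ld} to the homogeneous time-dependent system \eqref{sys1}: by Theorem~\ref{ThmFinal}, $\widehat v$ equals $v_-$ for $x<\frac{\sigma}{\alpha k}(1-e^{-\alpha kt})$, equals $v_+$ for $x$ larger, plus a Dirac mass of weight $\frac{w_0}{\alpha k}(1-e^{-\alpha kt})$ on the curve, while $\widehat u$ equals $u_\pm$ off the curve and $u_\delta$ on it; multiplying the velocity by $e^{-\alpha t}$ produces exactly the candidate \eqref{sol_final} with $x(t)=\frac{\sigma}{\alpha k}(1-e^{-\alpha kt})$, $w(t)=\frac{w_0}{\alpha k}(1-e^{-\alpha kt})$, and $\sigma,w_0,u_\delta$ as in \eqref{sysEqEDM-O}.

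First I would verify that \eqref{sol_final} satisfies \eqref{weakSol1}. Set $L=\{(x(t),t):t>0\}$, $\Omega_-=\{x<x(t)\}$, $\Omega_+=\{x>x(t)\}$, and write $v=\widetilde v+w(t)\delta_L$ with $\widetilde v=v_-$ on $\Omega_-$, $\widetilde v=v_+$ on $\Omega_+$, and $u=\widetilde u$ with $\widetilde u=u_\pm e^{-\alpha t}$ on $\Omega_\pm$, $u|_L=u_\delta e^{-\alpha t}$. On each $\Omega_\pm$ the background state $(v_\pm,u_\pm e^{-\alpha t})$ is a classical solution of \eqref{system_ld} (since $v_\pm$ is constant and the fluxes depend only on $t$), so in each scalar identity of \eqref{weakSol1} the integral of the smooth part over $\Omega_\pm$ equals, by the divergence theorem, a line integral along $L$ against $\varphi$ with outward conormal proportional to $(1,-x'(t))$. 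For the Dirac part one uses $\frac{d}{dt}\varphi(x(t),t)=\varphi_t(x(t),t)+x'(t)\varphi_x(x(t),t)$ together with the definition $\langle w(\cdot)\delta_L,\varphi\rangle=\int_0^\infty w(t)\varphi(x(t),t)\,dt$ and integrates by parts in $t$ (no boundary terms, $\varphi$ being compactly supported in $t>0$). Since $\varphi|_L$ and $\varphi_x|_L$ are arbitrary, the pair of identities in \eqref{weakSol1} is then equivalent to
\begin{align*}
 x'(t)&=(u_\delta e^{-\alpha t})^k,\\
 w'(t)&=(v_-u_-^k-v_+u_+^k)e^{-\alpha kt}-x'(t)(v_--v_+),\\
 \frac{d}{dt}\bigl(w(t)u_\delta e^{-\alpha t}\bigr)+\alpha\,w(t)u_\delta e^{-\alpha t}&=(v_-u_-^{k+1}-v_+u_+^{k+1})e^{-\alpha(k+1)t}-x'(t)(v_-u_--v_+u_+)e^{-\alpha t},
\end{align*}
together with $x(0)=w(0)=0$. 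Inserting the explicit $x(t)$ and $w(t)$ above and using $\sigma=u_\delta^k$, each of these three identities, after dividing by the common exponential, collapses to exactly one line of \eqref{sysEqEDM-O}; hence \eqref{sol_final} is a delta shock wave solution of \eqref{system_ld}--\eqref{datoRiemann}.

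For uniqueness within the class \eqref{deltaSol1}, I would run the same test-function computation with $x(t),w(t),u_\delta(t)$ left unknown. Tracing the characteristics $\dot x=u_\pm^k e^{-\alpha kt}$ issuing from the Riemann data forces the background states on $\Omega_\pm$ to be $(v_\pm,u_\pm e^{-\alpha t})$, and the argument above then yields the generalized Rankine--Hugoniot relations $x'=u_\delta(t)^k$, $w'=(v_-u_-^k-v_+u_+^k)e^{-\alpha kt}-x'(v_--v_+)$, $\frac{d}{dt}(w\,u_\delta)+\alpha\,w\,u_\delta=(v_-u_-^{k+1}-v_+u_+^{k+1})e^{-\alpha(k+1)t}-x'(v_-u_--v_+u_+)e^{-\alpha t}$ with $x(0)=w(0)=0$; solving them forces $u_\delta(t)=u_\delta e^{-\alpha t}$ with the constant $\sigma:=u_\delta^k$ and reduces the problem to \eqref{sysEqEDM-O}, which by the theorem preceding Theorem~\ref{ThmFinal} has a unique solution $(\sigma,w_0,u_\delta)$ under the entropy condition $u_+^k<\sigma<u_-^k$ --- a condition that holds automatically since $\sigma=\lim_{\varepsilon\to0+}\xi_\sigma^\varepsilon$ with $u_+^k<\xi_\sigma^\varepsilon<u_-^k$. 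The step I expect to be the main obstacle is the bookkeeping in the verification: keeping the orientation of the conormal on $L$ straight throughout the divergence theorem, and, in the second identity of \eqref{weakSol1}, arranging that the term $\alpha\,w(t)u_\delta e^{-\alpha t}$ produced by the time integration by parts cancels exactly against the source contribution $\langle\alpha vu,\varphi\rangle$ --- it is precisely this cancellation that keeps the third Rankine--Hugoniot relation free of spurious exponential factors and makes it coincide with the last line of \eqref{sysEqEDM-O}.
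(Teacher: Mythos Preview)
Your proposal is correct and follows essentially the same route as the paper: obtain the candidate \eqref{sol_final} from Theorem~\ref{ThmFinal} via the change of unknowns $(v,u)=(\widehat v,\widehat u e^{-\alpha t})$, then verify the distributional identities \eqref{weakSol1} by splitting the test-function pairing into the two smooth regions (handled with the divergence/Green theorem) and the line contribution from $w(t)\delta_L$ (handled by $\frac{d}{dt}\varphi(x(t),t)=\varphi_t+x'(t)\varphi_x$ and integration by parts in $t$). The only organizational difference is that the paper writes the verification as a single chain of equalities leading directly to $\langle \alpha vu,\varphi\rangle$, whereas you first extract the generalized Rankine--Hugoniot relations and then check that the explicit $x(t),w(t),u_\delta e^{-\alpha t}$ satisfy them; the paper records exactly this equivalence separately in its final remarks, so the two presentations are interchangeable. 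Your treatment of uniqueness is also slightly more explicit than the paper's, which simply inherits it from the uniqueness of $(\sigma,w_0,u_\delta)$ under the entropy condition established just before Theorem~\ref{ThmFinal}.
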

\begin{proof}
We need to show that \eqref{sol_final} is a solution to the problem \eqref{system_ld}--\eqref{datoRiemann} which can be found with $(v,u)=(\widehat{v},\widehat{u}e^{-\alpha t})$ and the result obtained in Theorem \ref{ThmFinal}. 
Therefore, for any test function $\varphi \in C_0^\infty (\mathbb{R} \times (0,\infty))$ we have
\begin{small}
\begin{align*}
\langle v u, \varphi_t \rangle + \langle v u^{k+1} , \varphi_x \rangle =& \int_0^\infty \int_\mathbb{R} (vu \varphi_t+vu^{k+1} \varphi_x) dx dt \\
&+ \int_0^\infty w(t) u_\delta e^{-\alpha t} (\varphi_t+ u_\delta^{k} e^{-\alpha kt}\varphi_x) dt \\
=&\int_0^\infty \int_{-\infty}^{x(t)} (v_-u_-e^{-\alpha t} \varphi_t+v_-u_-^{k+1}e^{-\alpha (k+1)t} \varphi_x) dx dt \\
&+ \int_0^\infty \int_{x(t)}^{\infty} (v_+u_+e^{-\alpha t} \varphi_t+v_+u_+^{k+1}e^{-\alpha (k+1)t} \varphi_x) dx dt\\
&+\int_0^\infty w(t) u_\delta e^{-\alpha t} (\varphi_t+ u_\delta^{k} e^{-\alpha kt} \varphi_x) dt \\
=&-\oint - \left( v_-u_-^{k+1}e^{-\alpha (k+1)t} \varphi \right) dt + \left( v_-u_-e^{-\alpha t} \varphi \right) dx\\
&+ \oint - \left( v_+u_+^{k+1}e^{-\alpha (k+1)t} \varphi \right) dt + \left( v_+u_+e^{-\alpha t} \varphi \right) dx\\
&+ \int_0^\infty \int_{\mathbb{R}} \alpha vu \varphi dx dt
+\int_0^\infty w(t) u_\delta e^{-\alpha t} (\varphi_t+ \sigma e^{-\alpha kt} \varphi_x) dt \\
=&\int_0^\infty (v_-u_-^{k+1}-v_+u_+^{k+1})e^{-\alpha (k+1)t} \varphi dt \\ 
& - \int_0^\infty \frac{dx(t)}{dt} (v_-u_--v_+u_+)e^{-\alpha t}  \varphi dt\\
&+ \int_0^\infty \int_{\mathbb{R}} \alpha vu \varphi dx dt
+\int_0^\infty w(t) u_\delta e^{-\alpha t} (\varphi_t+ \frac{dx(t)}{dt} \varphi_x) dt \\
=&\int_0^\infty  (v_-u_-^{k+1}-v_+u_+^{k+1})e^{-\alpha (k+1)t} \varphi dt \\
& - \int_0^\infty \frac{dx(t)}{dt} (v_-u_--v_+u_+)e^{-\alpha t}  \varphi dt\\
&-\int_0^\infty \frac{w_0 u_\delta}{\alpha k} \frac{d ( (1-e^{-\alpha kt}) e^{-\alpha t})}{dt} \varphi dt
+ \int_0^\infty \int_{\mathbb{R}} \alpha vu \varphi dx dt \\
=&\int_0^\infty \int_{\mathbb{R}} \alpha vu \varphi dx dt + \int_0^\infty \alpha w(t) u_\delta e^{-\alpha t} \varphi dt
=\langle \alpha vu, \varphi \rangle
\end{align*}
\end{small}
which implies the second equation of \eqref{weakSol1}. By a completely similar argument it is possible to obtain the first equation of \eqref{weakSol1}.
\end{proof}

As an application of Theorem \ref{Thm4.1} we have the following result:
\begin{corollary}[Keita and Bourgault \cite{KB}]
Consider the following Eulerian droplet model
\begin{equation*}
\begin{cases}
v_t+(vu)_x=0,\\
(vu)_t+(vu^2)_x=-\alpha uv,
\end{cases}
\end{equation*}
with initial data given by
\begin{equation*}
(v(x,0),u(x,0))=
\begin{cases}
(v_-,u_-), &\mbox{if } x<0,\\
(v_+,u_+), &\mbox{if } x>0,
\end{cases}
\end{equation*}
and assume that $v_\pm >0$ and $u_->u_+$. Then the Riemann solution to the 
Eulerian droplet model is given by
\begin{equation*}
(v(x,t),u(x,t))=
\begin{cases}
(v_-,u_-e^{-\alpha t}), &\mbox{if } x<x(t),\\
(w(t)\delta(x-x(t)),u_\delta e^{-\alpha t}), &\mbox{if } x=x(t),\\
(v_+,u_+e^{-\alpha t}), &\mbox{if } x>x(t),
\end{cases}
\end{equation*}
where $w(t)=\sqrt{v_-v_+}(u_--u_+) \frac{(1-e^{-\alpha t})}{\alpha}$, $x(t)=\frac{\sqrt{v_-}u_-+\sqrt{v_+}u_+}{\sqrt{v_-}+\sqrt{v_+}} \frac{(1-e^{-\alpha t})}{\alpha}$ and $u_\delta=\frac{\sqrt{v_-}u_-+\sqrt{v_+}u_+}{\sqrt{v_-}+\sqrt{v_+}}$, when $v_- \neq v_+$.\\
For the case $v_-=v_+$, $w(t)=v_-(u_--u_+) \frac{(1-e^{-\alpha t})}{\alpha}$, $x(t)= \frac{(u_-+u_+)(1-e^{-\alpha t})}{2 \alpha}$ and $u_\delta=\frac{1}{2}(u_-+u_+)$.
\end{corollary}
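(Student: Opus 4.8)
The plan is to apply Theorem~\ref{Thm4.1} directly: for $k=1$ the Eulerian droplet model is precisely the system \eqref{system_ld}, and $u_->u_+$ is exactly the hypothesis under which that theorem produces a unique measure solution of the form \eqref{sol_final}. All that remains is to solve the algebraic system \eqref{sysEqEDM-O} in closed form when $k=1$. In that case the entropy condition $u_+^k<\sigma<u_-^k$ reads $u_+<\sigma<u_-$, and since $\sigma=u_\delta$, eliminating $w_0$ between the last two equations of \eqref{sysEqEDM-O} leaves the single scalar equation
\begin{equation*}
(v_--v_+)\,u_\delta^{\,2}-2(v_-u_--v_+u_+)\,u_\delta+(v_-u_-^2-v_+u_+^2)=0 .
\end{equation*}

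First I would handle the generic case $v_-\neq v_+$, where this is a genuine quadratic. Computing its discriminant and checking that it collapses to $4\,v_-v_+(u_--u_+)^2$ is the only identity in the whole argument that is not immediate; with $v_\pm>0$ and $u_->u_+$ its square root is $2\sqrt{v_-v_+}\,(u_--u_+)$, so the two roots are $u_\delta=\bigl((v_-u_--v_+u_+)\pm\sqrt{v_-v_+}\,(u_--u_+)\bigr)/(v_--v_+)$. Writing $v_\pm=(\sqrt{v_\pm})^{2}$ and factoring $v_--v_+=(\sqrt{v_-}-\sqrt{v_+})(\sqrt{v_-}+\sqrt{v_+})$ rationalizes these to $(\sqrt{v_-}u_-+\sqrt{v_+}u_+)/(\sqrt{v_-}+\sqrt{v_+})$ and $(\sqrt{v_-}u_--\sqrt{v_+}u_+)/(\sqrt{v_-}-\sqrt{v_+})$. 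Only the first is a convex combination of $u_-$ and $u_+$ with strictly positive weights, hence only it satisfies $u_+<u_\delta<u_-$; the entropy condition therefore selects it uniquely. Substituting back into $w_0=-\sigma(v_--v_+)+(v_-u_--v_+u_+)$ and using $\sigma=u_\delta$ collapses the right-hand side to $w_0=\sqrt{v_-v_+}\,(u_--u_+)$. Finally, the formulas $x(t)=\frac{\sigma}{\alpha}(1-e^{-\alpha t})$ and $w(t)=\frac{w_0}{\alpha}(1-e^{-\alpha t})$ obtained from Theorem~\ref{Thm4.1} with $k=1$ (recall $\sigma=u_\delta$) give exactly the stated expressions.

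For the degenerate case $v_-=v_+=:v$ the leading term drops out and the equation becomes the linear relation $-2v(u_--u_+)\,u_\delta+v(u_-^2-u_+^2)=0$; since $v(u_--u_+)\neq0$ this forces $u_\delta=\tfrac12(u_-+u_+)$, hence $\sigma=\tfrac12(u_-+u_+)$, while $w_0=-\sigma\cdot0+v(u_--u_+)=v(u_--u_+)$, and $x(t)$, $w(t)$ follow as before. There is no genuine obstacle beyond this bookkeeping: the only point demanding care is the discriminant factorization together with the correct choice of root, and that is precisely where the entropy inequality $u_+<u_\delta<u_-$ is used. Existence and uniqueness of the measure solution are already supplied by Theorem~\ref{Thm4.1}, so nothing further needs to be verified.
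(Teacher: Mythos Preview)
Your proposal is correct and follows essentially the same route as the paper's own proof: apply Theorem~\ref{Thm4.1} with $k=1$, reduce \eqref{sysEqEDM-O} to the quadratic $(v_--v_+)u_\delta^{2}-2(v_-u_--v_+u_+)u_\delta+(v_-u_-^{2}-v_+u_+^{2})=0$, identify the two roots, use the entropy condition $u_+<u_\delta<u_-$ to select the convex-combination root, recover $w_0$, and treat $v_-=v_+$ separately as a linear equation. The only difference is that you spell out the discriminant factorization $4v_-v_+(u_--u_+)^{2}$ and the rationalization explicitly, whereas the paper simply states the two roots.
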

\begin{proof}
Using the Theorem \ref{Thm4.1} with $k=1$, we need solve the following system 
\begin{equation} \label{sysEqEDM}
 \begin{cases}
  \sigma = u_\delta,\\
  w_0=-\sigma(v_--v_+)+(v_- u_--v_+ u_+),\\
  w_0 u_\delta = -\sigma(v_- u_--v_+ u_+)+ \left( v_- u_-^2 -v_+ u_+^2 \right),
 \end{cases}
\end{equation}
subject to $u_+ < \sigma < u_-$. Thus, when $v_- \neq v_+$, from the system \eqref{sysEqEDM} we have
$$ (v_--v_+)u_\delta^2 -2(v_-u_--v_+u_+) u_\delta +(v_-u_-^2-v_+u_+^2)=0 $$
and therefore we can find
$$ u_\delta^{(1)} = \frac{\sqrt{v_-}u_--\sqrt{v_+}u_+}{\sqrt{v_-}-\sqrt{v_+}}
\qquad \mbox{ and } \qquad
 u_\delta^{(2)} = \frac{\sqrt{v_-}u_-+\sqrt{v_+}u_+}{\sqrt{v_-}+\sqrt{v_+}}.$$
 Observe that only $u_\delta^{(2)}$ satisfies the condition $u_+ < \sigma < u_-$.
 Thus, with $u_\delta = u_\delta^{(2)}$, from \eqref{sysEqEDM} we have that
 $$ w_0 = \sqrt{v_-v_+}(u_--u_+). $$
 When $v_-=v_+$, from \eqref{sysEqEDM} we have
 $$ 2(u_--u_+) u_\delta -(u_-^2-u_+^2)=0 $$
 and therefore
 $$ u_\delta = \frac{1}{2}(u_-+u_+) \qquad \mbox{ and } \qquad w_0= v_-(u_--u_+). $$
\end{proof}

\section{Final remarks} 
\begin{enumerate}
\item From Theorem \ref{Thm4.1}, we can observe that when $\alpha \to 0+$, 
$$ \lim_{\alpha \to 0+} x(t) =u_\delta^k t \qquad \mbox{ and } \qquad \lim_{\alpha \to 0+} w(t)=-u_\delta^k (v_--v_+)t+(v_-u_-^k-v_+u_+^k)t. $$
So, the limit behavior of the solution given in Theorem \ref{Thm4.1} as $\alpha \to 0+$ corresponds to the solution given in Theorem 5.6 of \cite{HYang}.
\item It is easy to see that \eqref{deltaSol1} is a delta shock solution with discontinuity $x=x(t)$ to the problem \eqref{system_ld}--\eqref{datoRiemann} if and only if the following generalized Rankine-Hugoniot conditions are satisfied
\begin{equation} \label{gR-H}
\begin{cases}
\frac{dx(t)}{dt}= (u_\delta(t))^k=\sigma(t),\\
\frac{dw(t)}{dt}= -\llbracket v \rrbracket \sigma(t)+\llbracket vu^k \rrbracket,\\
\frac{d(w(t)u_\delta(t))}{dt}=-\llbracket vu \rrbracket \sigma(t)+\llbracket vu^{k+1} \rrbracket-\alpha w(t)u_\delta(t),
\end{cases}
\end{equation}
where $\llbracket q \rrbracket=q(x(t)-,t)-q(x(t)+,t)$. Observe that the solution given in Theorem \ref{Thm4.1} satisfies the generalized Rankine-Hugoniot conditions. Moreover, in this case the generalized Rankine-Hugoniot conditions \eqref{gR-H} are equivalent to the equations given in the system \eqref{sysEqEDM-O}.
\end{enumerate}

{\bf Conflict of interest:} This work does not have any conflicts of interest.


\begin{thebibliography}{99}
\bibitem{Barenblatt} Barenblatt G.I. {\em Scaling, self-similarity, and intermediate asymptotics}. Cambridge University Press, 1996. 
\bibitem{Bouchut} Bouchut F. On zero pressxure gas dynamics. {\em Advances in kinetic theory and computing} Series on Advances in Mathematics for Applied Sciences 22, Singapore: World Scientific. 1994; pp. 171--190.
\bibitem{BG}  Brenier Y, Grenier E. Sticky particles and scalar conservation laws. {\em SIAM J. Num. Anal.} 1998; 35 (6): 2317--2328.
\bibitem{Crighton} Crighton  D.G. Model equations of nonlinear acoustics. {\em Annu. Rev. Fluid Mech.} 1979; 11: 11--33.
\bibitem{CS}  Crighton D.G., Scott J.F. Asymptotic solution of model equations in nonlinear acoustics. {\em Phil. Trans. Koy Sot.} A. 1979; 292: 101--134. 
https://doi.org/10.1098/rsta.1979.0046
\bibitem{Dafermos} Dafermos C.M. Solutions of the Riemann problem for a class of hyperbolic systems of conservation laws by the viscosity method. {\em Arch. Ration. Mech. Anal.} 1973; 52: 1--9.
\bibitem{Delacruz} De la cruz R. Riemann Problem for a $2 \times 2$ hyperbolic system with linear damping. {\em Acta Applicandae Mathematicae}. 2020.
https://doi.org/10.1007/s10440-020-00350-w
\bibitem{DS} De la cruz R., Santos M. Delta shock waves for a system of Keyfitz-Kranzer type, {\em Z. Angew. Math. Mech.} 2019; 99: e201700251 .
\bibitem{DE} Doyle J., Englefield M.J. Similarity solutions of a generalized Burgers equation. {\em IMA  Journal of Applied  Mathematics}. 1990; 44: 145--153.
\bibitem{ERS} E W., Rykov Y., Sinai Y. Generalized variational principles, global weak solutions and behavior with random initial data for systems of conservation laws arising in adhesion particle dynamics, {\em Commun.
Math. Phys.} 1996; 177: 349--380.
\bibitem{Ercole} Ercole G. Delta-shock waves as self-similar viscosity limits. {\em Quart. Appl. Math.} 2000; LVIII (1): 177--199.
\bibitem{Henriksen} Henriksen R. N. {\em Scale Invariance: Self-Similarity of the Physical World}. Wiley-VCH; 2015.
\bibitem{HW} Huang F., Wang Z. Well posedness for pressureless flow, {\em Commun. Math. Phys.} 2001; 222: 117--146.
\bibitem{KB} Keita S., Bourgault Y. Eulerian droplet model: Delta-shock waves and solution of the Riemann problem. {\em J. Math.Anal.Appl.} 2019; 472 (1): 1001--1027.
\bibitem{LeVeque} LeVeque R. J. The dynamics of pressureless dust clouds and delta waves, {\em J. Hyperbolic Differ. Equ.} 2004; 1(2): 315--327.
\bibitem{LY} Li J., Yang H. Delta-shocks as limit of solutions of multidimensional zero-pressure gas dynamics, {\em Quart. Appl. Math.} 2001; 59: 315--342.
\bibitem{Luo} Lou Y.Q., Wang W.G. New self-similar solutions of polytropic gas dynamics, {\em Mon. Not. Roy. Astron. Soc.} 2006; 372: 885-900.
\bibitem{MMZ} Majda A., Majda G., Zheng Y. Concentrations in the one-dimensional Vlasov-Poissonequations I: Temporal development and non-uniqueweak solutions in the single component case,{\em Physica D} 1994; 74: 268--300.
\bibitem{Polyanin} Polyanin A.D., Zaitsev V.F. {\em Handbook of nonlinear partial differential equations.} Chapman \& Hall/CRC; 2003.
\bibitem{Sachdev} Sachdev P.L. {\em Self-Similarity and Beyond: Exact Solutions of Nonlinear Problems.} Chapman \& Hall/CRC, Monographs and Surveys in Pure and Applied Mathematics 113; 2000.
\bibitem{SZ} Shandarin S.F., Zeldovich Y.B. Large-scale structure of the universe: Turbulence, intermittency, structures in a self-gravitating medium. {\em Rev. Modern Phys.} 1989; 61: 185--220.
\bibitem{Shen} Shen C. The Riemann problem for the pressureless Euler system with the Coulomb-like friction term, {\em IMA J. Appl. Math.} 2016; 81: 76--99.
\bibitem{Scott} Scott J.F. The long time asymptotics of solution to the generalized Burgers equation. {\em Proc. Roy. Sot. Land.} A. 1981; 373: 443--456. 
\bibitem{Suto} Suto Y., Silk J. Self-similar dynamics of polytropic gaseous spheres, {\em Astrophys. J.} 1988; 326: 527--538.
\bibitem{TZZ} Tan D., Zhang T., Zheng Y. Delta shock waves as limits of vanishing viscosity for hyperbolic systems of conversation laws. {\em J. Differential Equations.} 1994; 112: 1--32.
\bibitem{Tupciev} Tupciev V.A. On the method of introducing viscosity in the study of problems involving decay of a discontinuity. 
{\em Soviet Math. Dokl.} 1973; 14: 978--982.
\bibitem{MVSK} Vaganana B.M., Kumaran M.S. Kummer function solutions of damped Burgers equations with time-dependent viscosity by exact linearization. {\em Nonlinear Anal.: Real World Appl.} 2008; 9: 2222--2233.
\bibitem{WZ2} Wang J.H., Zhang H. A new viscous regularization of the Riemann problem for Burger's equation. {\em J. Partial Dig. Eqs.} 2000; 13: 253--263. 
\bibitem{WZ} Wang J., Zhang H. Existence and decay rates of solutions to the generalized Burgers equation. {\em J. Math. Anal. Appl.} 2003; 284: 213--235.
\bibitem{HYang} Yang H. Riemann problems for a class of coupled hyperbolic systems of conservation laws, {\em J. Differential Equations.} 1999; 159: 447--484.
\bibitem{YZ} Yang H., Zhang Y. New developments of delta shock waves and its applications in systems of conservation laws. {\em J. Differential Equations.} 2012; 252: 5951--5993.
\bibitem{HZhang} Zhang H. Global existence and asymptotic behaviour of the solution of a generalized Burger's equation with viscosity. {\em Computers Math. Applic.} 2001; 41(5-6): 589--596. 
\bibitem{ZW} Zhang H., Wang X. Large-Time Behavior of Smooth Solutions to a Nonuniformly Parabolic Equation. {\em Computers Math. Applic.} 2004; 47 (2-3): 353--363.

\end{thebibliography}
\end{document}